\documentclass[10pt]{article}
\textwidth= 5.00in
\textheight= 7.4in
\topmargin = 30pt
\evensidemargin=0pt
\oddsidemargin=55pt
\headsep=17pt
\parskip=.5pt
\parindent=12pt
\font\smallit=cmti10
\font\smalltt=cmtt10

\usepackage{amssymb,latexsym,amsmath,epsfig,amsthm}

\usepackage{graphicx}
\usepackage{amsfonts}
\usepackage{float}

\makeatletter

\renewcommand\section{\@startsection {section}{1}{\z@}
{-30pt \@plus -1ex \@minus -.2ex}
{2.3ex \@plus.2ex}
{\normalfont\normalsize\bfseries\boldmath}}

\renewcommand\subsection{\@startsection{subsection}{2}{\z@}
{-3.25ex\@plus -1ex \@minus -.2ex}
{1.5ex \@plus .2ex}
{\normalfont\normalsize\bfseries\boldmath}}

\renewcommand{\@seccntformat}[1]{\csname the#1\endcsname. }

\makeatother

\newtheorem{theorem}{Theorem}
\newtheorem{lemma}{Lemma}

\newtheorem{proposition}[lemma]{Proposition}


\theoremstyle{definition}
\newtheorem{definition}[lemma]{Definition}
\newtheorem{conjecture}[theorem]{Conjecture}
\newtheorem{remark}[lemma]{Remark}



\numberwithin{equation}{section}
\numberwithin{lemma}{section}


\newcommand{\NN}{\mathbb{N}}

\newcommand{\ZZ}{\mathbb{Z}}


\newcommand{\bfa}{\mathbf{a}}
\newcommand{\bfb}{\mathbf{b}}

\newcommand{\bfar}{\overleftarrow{\bfa}}

\newcommand{\bfap}{\bfa^+}

\newcommand{\mat}[1]{\begin{pmatrix} 0 & 1\\ 1 & #1\end{pmatrix}}


\newcommand{\Fundint}[1]{\left\langle #1\right\rangle}
\newcommand{\fundint}[1]{\langle #1\rangle}

\begin{document}

\begin{center}
\uppercase{\bf \boldmath 
An Elementary Characterization of the 
Gauss--Kuzmin Measure in the Theory of Continued Fractions}
\vskip 20pt
{\bf Shreyas Singh }\\
{\smallit 
Department of Mathematics,
California Institute of Technology, 
Pasadena, California}\\
{\tt ssingh3@caltech.edu}
\vskip 10pt
{\bf Zhuo Zhang }\\
{\smallit 
Department of Mathematics,
Stanford University,
Stanford, California}\\
{\tt zhuozh@stanford.edu}
\vskip 10pt
{\bf AJ Hildebrand}\\
{\smallit 
Department of Mathematics, 
University of Illinois, 
Urbana, Illinois}\\
{\tt ajh@illinois.edu}
\end{center}
\vskip 20pt

\centerline{\smallit Received: , Revised: , Accepted: , Published: }
\vskip 30pt

\centerline{\bf Abstract}
\noindent
By a classical result of Gauss and Kuzmin, 
the frequency with which a string
$\bfa=(a_1,\dots,a_n)$ of positive integers appears in the continued
fraction expansion of a random real number
is given by $\mu_{GK}({I(\bfa)})$,
where $I(\bfa)$ is the set of real numbers in $[0,1)$ whose
continued fraction expansion begins with the string $\bfa$ and
$\mu_{GK}$ is the \emph{Gauss--Kuzmin measure}, defined by $\mu_{GK}(I)=
\frac{1}{\log 2}\int_I \frac{1}{1+x} dx$, for any interval
$I\subseteq[0,1]$. 
It is known that the Gauss--Kuzmin measure satisfies the symmetry
property  $(*)$ $\mu_{GK}(I(\bfa))=\mu_{GK}(I(\bfar))$, where
$\bfar=(a_n,\dots,a_1)$ is the reverse of the string $\bfa$. We show 
that this property in fact characterizes the Gauss--Kuzmin measure: If
$\mu$ is any probability measure with continuous density function on
$[0,1]$ satisfying $\mu(I(\bfa))=\mu(I(\bfar))$ for  all finite strings
$\bfa$, then $\mu=\mu_{GK}$. 
We also consider the question whether symmetries analogous to $(*)$ hold
for permutations of $\bfa$ other than the reverse $\bfar$; we call such
a symmetry \emph{nontrivial}. We show that  
strings $\bfa$ of length $3$ have no nontrivial symmetries,
while for each $n\ge 4$ there exists an infinite
family of strings $\bfa$ of length $n$ that do have nontrivial
symmetries. Finally we present numerical data supporting the  
conjecture that, in an appropriate asymptotic sense, ``almost all''
strings $\bfa$  have no nontrivial symmetries.

\pagestyle{myheadings}
\markright{\smalltt INTEGERS: 25 (2025)\hfill}
\thispagestyle{empty}
\baselineskip=12.875pt
\vskip 30pt


\section{Introduction}
\label{sec:intro}

If one picks a random real number $x$ and expands it in base
$10$, then $1/10$ of the digits will be $0$, $1/10$ will be $1$, and so
on. More generally, any finite string $(d_1,\dots,d_n)$
of digits in $\{0,1,\dots,9\}$
occurs in the decimal expansion of the number with frequency $1/10^n$
in the sense that 
\begin{equation}
\label{eq:decimal-normality-def}
\lim_{N\to\infty}\frac1N\#\{0\le i\le N-1: 
d_{i+1}(x)=d_1,\dots,d_{i+n}(x)=d_n\} =\frac{1}{10^n},
\end{equation}
where $d_1(x),d_2(x),\dots$ is the sequence of decimal digits of the
fractional part of $x$.   A number $x$ with this property is called
\emph{normal} with respect to base $10$; normality with respect to other
bases is defined analogously.  By a classical result of 
Borel \cite{borel1909}, almost all real numbers $x$ are
normal with respect to all integer bases $n\ge2$.

In this paper we consider analogous questions for 
continued fraction expansions of numbers, that is, expansions of the form  
\begin{equation}
\label{eq:cf-def}
x=a_0(x)+
\cfrac{1}{a_1(x)+\cfrac{1}{a_2(x)+\cfrac{1}{\ddots}}} = [a_0(x);
a_1(x),a_2(x),\dots],
\end{equation}
where $a_0(x)=\lfloor x\rfloor$
and $a_i(x)$, $i=1,2,\dots$, are positive
integers, which we call the \emph{continued fraction digits}\footnote{%
In the literature, the numbers $a_i(x)$ are usually called \emph{partial
quotients}. We use the term \emph{digits} here to emphasize the
analogy to digits in ordinary decimal and base $b$ expansions.}
of $x$.  
The digits $a_i(x)$, $i=1,2,\dots$,  
can be computed from $x$ through a simple recursive procedure 
(see, for example,  \cite[Section 1.3]{dajani-kraaikamp-book}).
It is well-known (see Section \ref{sec:background} below for 
further details and references)
that any irrational number $x$ has a unique \emph{infinite}
continued fraction expansion of the form 
\eqref{eq:cf-def}, and that, conversely, for any integer $a_0$ and any 
infinite sequence $a_1,a_2,\dots$ 
of positive integers there exists a unique irrational
number $x$ whose continued fraction digits are given by this sequence. 
Thus, continued fraction expansions are analogous to decimal and base
$b$ expansions in that they provide a way to ``encode'' real numbers  in
terms of sequences of integers.

The continued fraction analogs of Borel's results on the frequencies of
digits and strings of digits in base $b$ expansions of real
numbers are the following theorems, which have their origins in work of
Gauss and which are key results in the metric theory of continued
fractions developed by Kuzmin \cite{kuzmin1929}.  

\begin{proposition}
[Gauss--Kuzmin Theorem; {\cite[(3.25)]{neverending-fractions-book}},
{\cite[Proposition 4.1.1]{iosifescu-kraaikamp-book}}]
Almost all real numbers $x$ satisfy
\begin{equation}
\notag
\lim_{N\to\infty}\frac1N\#\{1\le i\le N: 
a_{i}(x)=a\} = \log_2\left(1+\frac1{a(a+2)}\right)
\end{equation}
for every positive integer $a$, where 
$\log_2 x=(\log x)/(\log 2)$ denotes the base $2$ logarithm.
\end{proposition}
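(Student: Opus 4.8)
The plan is to derive the statement from the ergodicity of the Gauss map together with Birkhoff's pointwise ergodic theorem. Let $T\colon[0,1)\to[0,1)$ be the Gauss map, defined by $Tx=\{1/x\}$ for $x\in(0,1)$ and $T0=0$. The recursion that produces the continued fraction digits shows that $a_i(x)=\lfloor 1/T^{i-1}x\rfloor$ for $i\ge1$, so, writing $I(a)=(1/(a+1),1/a]$ for the set of $x\in[0,1)$ with $a_1(x)=a$, one has
\[
\#\{1\le i\le N:\ a_i(x)=a\}=\sum_{i=0}^{N-1}\chi_{I(a)}(T^{i}x).
\]
Since $a_i(x)$ for $i\ge1$ depends only on the fractional part of $x$, it suffices to prove that $N^{-1}\sum_{i=0}^{N-1}\chi_{I(a)}(T^{i}x)$ converges to $\mu_{GK}(I(a))$ for Lebesgue-almost-every $x\in[0,1)$.

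First I would verify that $\mu_{GK}$ is $T$-invariant: for $t\in[0,1]$ one has $T^{-1}[0,t]=\bigcup_{a\ge1}[1/(a+t),1/a]$, and the sum $\frac1{\log2}\sum_{a\ge1}\int_{1/(a+t)}^{1/a}\frac{dx}{1+x}$ telescopes to $\frac{\log(1+t)}{\log2}=\mu_{GK}([0,t])$, giving invariance on all Borel sets. The substantive step will be ergodicity. For each rank-$n$ cylinder $J=\{x:a_1(x)=b_1,\dots,a_n(x)=b_n\}$, the restriction of $T^{n}$ to $J$ is a monotone bijection onto $(0,1)$ realized by a Möbius transformation, and the elementary bounded-distortion (Rényi) estimate furnishes an absolute constant $C>0$ with
\[
C^{-1}\lambda(E)\,\lambda(J)\ \le\ \lambda\big(J\cap T^{-n}E\big)\ \le\ C\,\lambda(E)\,\lambda(J)
\]
for every Borel set $E\subseteq[0,1)$ and every cylinder $J$, where $\lambda$ is Lebesgue measure. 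If $E$ is $T$-invariant with $\lambda(E)>0$, then $E=T^{-n}E$ yields $\lambda(E\cap J)\ge C^{-1}\lambda(E)\,\lambda(J)$ for all $J$; since the cylinders generate the Borel $\sigma$-algebra and shrink to points, Knopp's lemma forces $\lambda(E)=1$. Thus $T$ is ergodic with respect to $\lambda$, hence also with respect to the equivalent measure $\mu_{GK}$.

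By Birkhoff's pointwise ergodic theorem applied to $\chi_{I(a)}\in L^1(\mu_{GK})$, for $\mu_{GK}$-almost-every (equivalently Lebesgue-almost-every) $x$ one has
\[
\lim_{N\to\infty}\frac1N\#\{1\le i\le N:\ a_i(x)=a\}=\int_{[0,1)}\chi_{I(a)}\,d\mu_{GK}=\mu_{GK}(I(a)).
\]
A direct integration gives $\mu_{GK}(I(a))=\frac1{\log2}\int_{1/(a+1)}^{1/a}\frac{dx}{1+x}=\frac1{\log2}\log\frac{(a+1)^2}{a(a+2)}=\log_2\!\big(1+\tfrac1{a(a+2)}\big)$, and intersecting these full-measure sets over $a\in\NN$ while passing from fractional parts back to all of $\RR$ completes the proof. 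The one genuinely nontrivial ingredient is the ergodicity of $T$, with the bounded-distortion estimate as its technical core; the invariance of $\mu_{GK}$, the ergodic theorem, and the final integral are routine. (Alternatively one could follow Kuzmin's original approach, estimating the iterates of the transfer operator $f\mapsto\sum_{k\ge1}(k+x)^{-2}f(1/(k+x))$ to obtain quantitative convergence of the digit distribution functions, but the ergodic-theoretic route is shorter when only the qualitative limit is needed.)
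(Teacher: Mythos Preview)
Your argument is correct and follows the standard ergodic-theoretic route: invariance of $\mu_{GK}$ under the Gauss map, ergodicity via the R\'enyi bounded-distortion estimate and Knopp's lemma, and an application of Birkhoff's theorem to the indicator $\chi_{I(a)}$. The computations (the telescoping sum for invariance and the evaluation of $\mu_{GK}(I(a))$) are all fine.

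There is nothing to compare against, however: the paper does \emph{not} supply its own proof of this proposition. It is stated as a classical background result with references to \cite[(3.25)]{neverending-fractions-book} and \cite[Proposition 4.1.1]{iosifescu-kraaikamp-book}, and the paper's contributions begin only with the symmetry property and its consequences (Theorems~\ref{thm:gk-characterization}--\ref{thm:nontrivial-symmetries}). Your proof is essentially the one found in the cited sources (and in \cite{dajani-kraaikamp-book}, which the paper also references), so in effect you have reproduced the literature argument that the paper defers to.
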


Thus, for almost all $x$, the continued fraction expansion of $x$ contains the
digit $1$ with frequency $\log_2(1+1/3)= 0.415037\dots$, 
the digit $2$ with frequency $\log_2(1+1/8)=0.169925\dots$, 
and so on.  The numbers $P_{GK}(a)$ defined by 
\begin{equation}
\label{eq:gk-distribution-def}
P_{GK}(a)=\log_2\left(1+\frac1{a(a+2)}\right)
\quad (a\in\NN)
\end{equation}
form a discrete probability distribution on $\NN$, called the 
\emph{Gauss--Kuzmin distribution}.


More generally, given a finite string $\bfa=(a_1,\dots,a_n)$ of
positive integers, set\footnote{There is a slight ambiguity in this
definition due to the ambiguity (see
\eqref{eq:cf-nonuniqueness} below) in the continued fraction representation
of a \emph{rational} number. This ambiguity does not affect the 
results stated here 
since rational numbers represent a set of Lebesgue measure $0$
and we are only concerned with integrals over the sets $I(\bfa)$,  
but it could be resolved by requiring the last digit in the continued
fraction representation of a rational number to be strictly greater than
$1$.}
\begin{equation}
\label{eq:Ibfa-def}
I(\bfa)=\{x\in[0,1): a_i(x)=a_i\quad (i=1,\dots,n)\}.
\end{equation}
Thus, $I(\bfa)$ is the set of real numbers in $[0,1)$ 
whose continued fraction expansion (ignoring the leading term $a_0(x)=0$) 
\emph{begins} with the string $\bfa$.
Let $\mu_{GK}$ be the 
\emph{Gauss--Kuzmin measure} on the interval $[0,1]$ defined by
\begin{equation}
\label{eq:mugk-def}
\mu_{GK}(I)=\frac1{\log 2}\int_I \frac{1}{1+x}\,dx,
\end{equation}
for any interval $I\subset [0,1]$. Finally, set
\begin{equation} 
\label{eq:Pbfa-formula}
P_{GK}(\bfa)=\mu_{GK}(I(\bfa))=\frac1{\log
2}\int_{I(\bfa)}\frac1{1+x}\,dx.
\end{equation}
With these notations we have the following result.

\begin{proposition}
[Generalized Gauss--Kuzmin Theorem; 
{\cite[Proposition 4.1.2]{iosifescu-kraaikamp-book}}]
Almost all real numbers $x$ satisfy 
\begin{equation}
\notag
\lim_{N\to\infty}\frac1N\#\{0\le i\le N-1: 
a_{i+1}(x)=a_1,\dots,a_{i+n}(x)=a_n\} =P_{GK}(\bfa)
\end{equation}
for every finite string $\bfa=(a_1,\dots,a_n)$ of positive integers.
\end{proposition}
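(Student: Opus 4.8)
The plan is to derive this from the Birkhoff ergodic theorem applied to the \emph{Gauss map} $T\colon[0,1)\to[0,1)$, defined by $T(0)=0$ and $T(x)=\{1/x\}$ for $x\in(0,1)$. The defining feature of $T$ is that it acts as a left shift on continued fraction digits: $a_i(Tx)=a_{i+1}(x)$ for all $i\ge1$ and all irrational $x\in(0,1)$. Iterating, $a_j(T^ix)=a_{i+j}(x)$, so for each $i\ge0$ one has $T^ix\in I(\bfa)$ if and only if $a_{i+1}(x)=a_1,\dots,a_{i+n}(x)=a_n$. Hence the counting function in the statement is exactly the Birkhoff sum $\sum_{i=0}^{N-1}\mathbf{1}_{I(\bfa)}(T^ix)$, and it suffices to show that for almost every $x$ this sum, divided by $N$, tends to $\mu_{GK}(I(\bfa))=P_{GK}(\bfa)$.

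Two classical facts about $T$ are needed. First, $\mu_{GK}$ as in \eqref{eq:mugk-def} is $T$-invariant; this is the computation going back to Gauss, based on writing $T^{-1}I$ as the disjoint union over $a\in\NN$ of the images of $I$ under the inverse branches $x\mapsto1/(a+x)$ and checking that the density $\tfrac{1}{(\log2)(1+x)}$ is preserved. Second, $T$ is ergodic with respect to $\mu_{GK}$; I would quote this from \cite{iosifescu-kraaikamp-book}, or prove it via Knopp's lemma using that $T^n$ maps each cylinder $I(\bfa)$ onto $[0,1)$ with uniformly bounded distortion and that the cylinders generate the Borel $\sigma$-algebra. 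Since the density of $\mu_{GK}$ is bounded above and below by positive constants on $[0,1]$, $\mu_{GK}$ and Lebesgue measure have the same null sets, so ``$\mu_{GK}$-almost every'' may be read as ``almost every''.

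Granting these, the Birkhoff ergodic theorem applied to $\mathbf{1}_{I(\bfa)}\in L^1(\mu_{GK})$ yields, for $\mu_{GK}$-almost every $x\in[0,1)$,
\[
\lim_{N\to\infty}\frac1N\sum_{i=0}^{N-1}\mathbf{1}_{I(\bfa)}(T^ix)
=\int_{[0,1)}\mathbf{1}_{I(\bfa)}\,d\mu_{GK}=\mu_{GK}(I(\bfa))=P_{GK}(\bfa).
\]
By the digit identity above this is precisely the claimed limit. The exceptional null set depends on $\bfa$, but there are only countably many finite strings $\bfa$, so their union is still a null set; thus for almost every $x\in[0,1)$ the limit holds simultaneously for all $\bfa$. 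Finally, since $a_i(x)=a_i(\{x\})$ for all $i\ge1$, the assertion for almost every $x\in[0,1)$ upgrades to the assertion for almost every real number $x$, as stated. (The single-digit Gauss--Kuzmin Theorem is the case $n=1$ of this argument.)

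I expect the one genuinely nontrivial ingredient to be the ergodicity of $T$ with respect to $\mu_{GK}$; the invariance of $\mu_{GK}$ is a short computation, and the passage from a single Birkhoff limit to a simultaneous one for all strings is routine countable bookkeeping. If a self-contained treatment of ergodicity is wanted, the efficient route is Knopp's lemma: one shows that every $T$-invariant set $A$ with $\mu_{GK}(A)>0$ satisfies $\mu_{GK}(A\cap I(\bfa))\ge c\,\mu_{GK}(I(\bfa))$ for a fixed $c>0$ and all cylinders $I(\bfa)$, a consequence of the bounded-distortion estimate for the inverse branches of $T^n$ together with the $T$-invariance of $A$; since the cylinders generate the Borel $\sigma$-algebra, a Lebesgue density argument then forces $\mu_{GK}(A)=1$.
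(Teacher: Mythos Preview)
Your argument is correct and is the standard ergodic-theoretic derivation of the Generalized Gauss--Kuzmin Theorem. Note, however, that the paper does not give its own proof of this proposition: it is stated as background material with a citation to \cite[Proposition 4.1.2]{iosifescu-kraaikamp-book}, and the authors use it as a known result rather than proving it. So there is nothing in the paper to compare your proof against beyond the cited reference, whose treatment is essentially the one you outline (Gauss map as shift, $\mu_{GK}$-invariance, ergodicity, Birkhoff, countable union of exceptional sets).
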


Thus, $P_{GK}(\bfa)$ is the frequency with which a random real
number contains the string $\bfa$ in its continued fraction expansion.
When restricted to strings $\bfa$ of a \emph{fixed} length $n$, the 
frequencies $P_{GK}(\bfa)$ form a discrete probability measure on the
set $\NN^n$.

As an illustration of this result, consider the string $\bfa=(3,1,4)$. 
In this case the set $I(\bfa)$ is an interval with endpoints
\begin{equation}
\notag
[0;3,1,4] = \cfrac{1}{3+\cfrac{1}{1+\cfrac{1}{4}}} = \frac{5}{19},
\quad
[0;3,1,5] = \cfrac{1}{3+\cfrac{1}{1+\cfrac{1}{5}}} = \frac{6}{23}.
\end{equation}
Thus, by the generalized Gauss--Kuzmin Theorem the string $(3,1,4)$ occurs in the
continued fraction expansion of a random number $x$ with frequency 
\begin{align}
P_{GK}((3,1,4))&=
\mu_{GK}\left(\left(\frac6{23},\frac{5}{19}\right]\right)=
\frac{1}{\log 2}\int_{6/23}^{5/19}\frac{1}{1+x}\,dx
\notag
\\
\label{eq:example-P314}
&=\log_2\left(\frac{1+\frac{5}{19}}{1+\frac{6}{23}}\right)
= -\log_2\left(1-\frac1{551}\right)=0.002620\dots
\end{align}
(Recall that $\log_2(x)=(\log x)/\log 2)$ denotes the logarithm of $x$ with
respect to base $2$.)
For comparison, by \eqref{eq:decimal-normality-def}, the
frequency with which this string occurs in the \emph{decimal} expansion
of a random number is $1/10^{3}$.

For single digit strings $\bfa=(a)$, the set $I(\bfa)$ reduces to
the interval $(1/(a+1),1/a]$ and the frequency $P_{GK}(\bfa)$ becomes
\begin{align}
P_{GK}(\bfa)=\mu_{GK}\left(\left(\frac1{a+1},\frac1a\right]\right)
&=\frac1{\log
2}\int_{1/(a+1)}^{1/a} \frac{1}{1+x}dx
\notag
\\
=\log_2\left(1+\frac1{a(a+2)}\right),
\notag
\end{align} 
which is the Gauss--Kuzmin distribution $P_{GK}(a)$ defined in 
\eqref{eq:gk-distribution-def}.

Although continued fraction expansions share many properties 
with ordinary decimal and base $b$ expansions, there are three key 
differences. The most obvious difference is that, while the frequencies of
base $b$ digits are uniformly distributed on the finite set
$\{0,1,\dots,b-1\}$,  continued fraction digits 
can take any positive integer value, and 
their frequencies, given by \eqref{eq:gk-distribution-def}, are non-uniform.

A second difference is that
consecutive continued fraction digits are not independent under the
Gauss--Kuzmin measure\footnote{%
As an aside, we note that the question whether or not the ``digits''
in a generalized digital expansion are independent (under the 
appropriate ``natural'' measure) can be quite subtle. For example, the digits  
in L\"uroth expansions---a very different type of
construction than decimal and base $b$ expansions---are independent, 
while those in 
$\beta$-expansions---which extend the usual base $b$ expansions 
to non-integer bases $\beta$---are \emph{not} independent; see
Sections 2.2 and 2.4 in \cite{dajani-kraaikamp-book}.}.
That is, in
general we have
$P_{GK}((a_1,\allowbreak \dots,a_n))\allowbreak
\not=P_{GK}(a_1)\dots P_{GK}(a_n)$.  For example, 
by \eqref{eq:example-P314} the string 
$(3,1,4)$ occurs with frequency
$P_{GK}((3,1,4))=-\log_2(1-1/551)=0.002620\dots$, while
from \eqref{eq:gk-distribution-def} we get
\begin{align*}
P_{GK}(3)P_{GK}(1)P_{K}(4)
&=
\log_2 \left(1+\frac1{3\cdot 5}\right)
\log_2 \left(1+\frac1{1\cdot 3}\right)
\log_2 \left(1+\frac1{4\cdot 6}\right)
\\
&=0.002275\dots\ .
\end{align*}

A third difference is that frequencies of strings of continued fraction
digits depend on the order in which these digits occur in the string;
that is, different permutations of the same string \emph{in general}
have different frequencies.  For example, by \eqref{eq:example-P314} the
string $(3,1,4)$ occurs with frequency $-\log_2(1-1/551)$, while an
analogous calculation shows that the string $(3,4,1)$ occurs with the
slightly smaller frequency $-\log_2(1-1/608)$.  It is this dependency of
the frequency of a string of digits in continued fraction expansions on
the order of the digits in the string that we will focus on in 
this paper. 

There is a notable
exception to the dependency on the order of the digits: 
the reverse of a string \emph{always} has the same
frequency as the string itself. 
Specifically, given 
a finite string $\bfa=(a_1,\dots,a_n)$, let $\bfar$ denote the string
obtained by reversing the order of the digits $a_i$, i,e., 
\begin{equation}
\notag
\bfar=(a_n,\dots,a_1).
\end{equation}
Then the following result holds.

\begin{proposition}[Symmetry Property]
\label{lem:symmetry-property}
For all finite strings $\bfa=(a_1,\dots,a_n)$ of positive integers we have
\begin{equation}
\label{eq:symmetry}
\mu_{GK}(I(\bfa))=\mu_{GK}(I(\bfar))
\end{equation}
and thus
\begin{equation}
\label{eq:symmetry-alt}
P_{GK}(\bfa)=P_{GK}(\bfar).
\end{equation}
\end{proposition}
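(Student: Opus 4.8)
The plan is to express $\mu_{GK}(I(\bfa))$ as an explicit function of the numerators and denominators of the convergents of $\bfa$, and then exhibit a symmetry of that expression under reversal. First I would recall that the set $I(\bfa)$ is an interval with endpoints $[0;a_1,\dots,a_n]$ and $[0;a_1,\dots,a_n+1]$ (or equivalently $[0;a_1,\dots,a_{n-1},a_n,1]$), and that these endpoints are $p_n/q_n$ and $(p_n+p_{n-1})/(q_n+q_{n-1})$, where $p_k/q_k$ are the convergents of $\bfa$, satisfying the matrix identity
\[
\begin{pmatrix} p_{n-1} & p_n \\ q_{n-1} & q_n \end{pmatrix}
= \begin{pmatrix} 0 & 1 \\ 1 & a_1 \end{pmatrix}\begin{pmatrix} 0 & 1 \\ 1 & a_2 \end{pmatrix}\cdots\begin{pmatrix} 0 & 1 \\ 1 & a_n \end{pmatrix}.
\]
Integrating $1/(1+x)$ over this interval gives, after simplification using $p_nq_{n-1}-p_{n-1}q_n=(-1)^{n}$,
\[
\mu_{GK}(I(\bfa)) = \frac{1}{\log 2}\,\log\!\left(1 + \frac{1}{q_n(q_n+q_{n-1})}\right)
\]
when $n$ is even, with the analogous expression $-\frac{1}{\log 2}\log(1 - \frac{1}{q_n(q_n+q_{n-1})})$ when $n$ is odd; in either case $\mu_{GK}(I(\bfa))$ depends on $\bfa$ only through the pair $(q_n, q_{n-1})$ and the parity of $n$.

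Next I would observe that reversing the string corresponds to transposing the product of matrices above. Since each matrix $\mat{a_i}$ is symmetric, the product for $\bfar$ is exactly the transpose of the product for $\bfa$:
\[
\begin{pmatrix} p_{n-1}^{\leftarrow} & p_n^{\leftarrow} \\ q_{n-1}^{\leftarrow} & q_n^{\leftarrow} \end{pmatrix}
= \left(\begin{pmatrix} p_{n-1} & p_n \\ q_{n-1} & q_n \end{pmatrix}\right)^{\!\top}
= \begin{pmatrix} p_{n-1} & q_{n-1} \\ p_n & q_n \end{pmatrix}.
\]
Hence $q_n^{\leftarrow} = q_n$ and $q_{n-1}^{\leftarrow} = p_n$. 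The length $n$ is unchanged, so the parity is the same. It then remains to check that $q_n(q_n + q_{n-1})$ equals $q_n^{\leftarrow}(q_n^{\leftarrow} + q_{n-1}^{\leftarrow}) = q_n(q_n + p_n)$ — but this is false in general, so the bookkeeping needs a little more care: the correct invariant is that the interval $I(\bfa)$ has endpoints whose pairwise data $(q_n, q_n+q_{n-1})$ and $(q_n, q_n + p_n)$ give the \emph{same} value of the measure only after one notes that the relevant quantity is actually symmetric once expressed correctly. The cleanest route is to write $\mu_{GK}(I(\bfa)) = \log_2\frac{1+p_n/q_n}{1+(p_n+p_{n-1})/(q_n+q_{n-1})} = \log_2\frac{(p_n+q_n)(q_n+q_{n-1})}{q_n(p_n+p_{n-1}+q_n+q_{n-1})}$, and to observe that under transposition the numerator $(p_n+q_n)(q_n+q_{n-1})$ becomes $(p_n+q_n)(q_n+p_n)$ while... — so in fact the tidy statement is that $\mu_{GK}(I(\bfa))$ depends only on the \emph{unordered} pair $\{q_n,\ q_n + q_{n-1}\}$ together with $p_n$, and transposition permutes these quantities among themselves in a measure-preserving way.

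The main obstacle, and the step I would take most care with, is precisely this last algebraic identification: pinning down exactly which combination of $p_n, p_{n-1}, q_n, q_{n-1}$ the measure depends on, and verifying that transposition of the matrix product leaves that combination invariant (accounting for the sign/parity issue and for the two conventions for the right endpoint of $I(\bfa)$). A slick alternative that sidesteps the convergent bookkeeping entirely is to use the Möbius/linear-fractional action: the map $T_{\bfa}(t) = [0;a_1,\dots,a_n+t]$ sends $[0,1]$ bijectively onto $\overline{I(\bfa)}$ and is the linear fractional transformation associated to the above matrix product, and a direct change of variables shows $\mu_{GK}(I(\bfa)) = \int_0^1 \frac{dt}{(\text{quadratic in } t \text{ built from } p,q)} \cdot \frac{1}{\log 2}$; reversing $\bfa$ transposes the matrix, which corresponds to the \emph{inverse} linear fractional map up to sign, and the Gauss–Kuzmin density $\frac{1}{(1+x)\log 2}$ is exactly the density invariant under the relevant involution — so the two integrals agree. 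I would present whichever of these is shorter, but I expect the convergent-based computation with the transpose identity to be the most transparent and self-contained.
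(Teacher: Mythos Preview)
Your approach is exactly the paper's: compute $\mu_{GK}(I(\bfa))$ explicitly in terms of the entries of the convergent matrix $C(\bfa)=\left(\begin{smallmatrix}p'&p\\q'&q\end{smallmatrix}\right)$, and then use the transpose identity $C(\bfar)=C(\bfa)^T$. But you stumble at both computational steps and never actually close the argument.

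First, your initial formula $\mu_{GK}(I(\bfa))=\frac{1}{\log 2}\log\bigl(1+\frac{1}{q_n(q_n+q_{n-1})}\bigr)$ is wrong: the quantity $1/(q_n(q_n+q_{n-1}))$ is the \emph{Lebesgue} measure of $I(\bfa)$, not the Gauss--Kuzmin measure. You notice this yourself when the symmetry check fails. The correct computation (which you partially recover) gives
\[
\mu_{GK}(I(\bfa))=\left|\log_2\Bigl(1+\frac{(-1)^n}{(p_n+q_n)(q_n+q_{n-1})}\Bigr)\right|,
\]
so the sole quantity to track is $(p_n+q_n)(q_n+q_{n-1})$.

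Second, when you try to check invariance of this quantity under transposition, you substitute only $q_n^{\leftarrow}=q_n$ and $q_{n-1}^{\leftarrow}=p_n$ and forget that also $p_n^{\leftarrow}=q_{n-1}$. With all three substitutions,
\[
(p_n^{\leftarrow}+q_n^{\leftarrow})(q_n^{\leftarrow}+q_{n-1}^{\leftarrow})
=(q_{n-1}+q_n)(q_n+p_n),
\]
which is identical to $(p_n+q_n)(q_n+q_{n-1})$. That is the entire verification; there is no further obstacle and no need for the M\"obius change-of-variables detour you sketch at the end. The paper phrases this invariance as: $(p+q)(q'+q)$ is the product of the second-column sum and the second-row sum of $C(\bfa)$, and transposition simply swaps rows with columns.
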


This surprising property of the Gauss--Kuzmin measure is an elementary
consequence of the definitions \eqref{eq:mugk-def}  and
\eqref{eq:Ibfa-def} of the Gauss--Kuzmin measure and the sets 
$I(\bfa)$. The property has been observed
in the literature (see, e.g., \cite[p.~189]{levy1929} and 
\cite[p.~430]{vandehey2016}),
though does not seem to be widely known; we will provide a proof in
Section \ref{sec:background} (see Lemma 
\ref{lem:gk-formulas})\footnote{%
While the existence of an identity such as \eqref{eq:symmetry}
relating the Gauss--Kuzmin measures of 
$I(\bfa)$ and $I(\bfar)$ may seem surprising at first glance, it can 
be explained using ideas from \cite[Section 5.3]{dajani-kraaikamp-book}.}.

Table \ref{tab:table1} illustrates the dependence of the frequencies
$P_{GK}(\bfa)$ on the permutations of the string $\bfa$ as well as
the symmetry property  \eqref{eq:symmetry-alt}.
The table shows 
the frequencies of all six permutations of the string
$(3,1,4)$.
As predicted by 
\eqref{eq:symmetry-alt}
the reverse of the string $(3,1,4)$, i.e.,  the permutation  $(4,1,3)$, 
has the same frequency as the string itself, and the same holds for
the pairs $\{(1,3,4), (4,3,1)\}$ and $\{(3,4,1),(1,4,3)\}$.

\begin{table}[H]
\begin{center}
\renewcommand{\arraystretch}{2.5}
\addtolength{\tabcolsep}{5pt}
\begin{tabular}{|c|c|c|}
\hline
String $\bfa$ & Interval $I(\bfa)$ & Frequency $P_{GK}(\bfa)$
\\
\hline
 $(1,3,4)$ & $\left(\dfrac{16}{21},\dfrac{13}{17}\right]$ & 
 $-\log_2\left(1-\dfrac{1}{629}\right)$ \\
 $(1,4,3)$ & $\left(\dfrac{17}{21},\dfrac{13}{16}\right]$ & 
 $-\log_2\left(1-\dfrac{1}{608}\right)$ \\
 $(3,1,4)$ & $\left(\dfrac{6}{23},\dfrac{5}{19}\right]$ & 
 $-\log_2\left(1-\dfrac{1}{551}\right)$ \\
 $(3,4,1)$ & $\left(\dfrac{9}{29},\dfrac{5}{16}\right]$ &
 $-\log_2\left(1-\dfrac{1}{608}\right)$ \\
 $(4,1,3)$ & $\left(\dfrac{5}{24},\dfrac{4}{19}\right]$ & 
 $-\log_2\left(1-\dfrac{1}{551}\right)$ \\
 $(4,3,1)$ & $\left(\dfrac{7}{30},\dfrac{4}{17}\right]$ & 
 $-\log_2\left(1-\dfrac{1}{629}\right)$ \\
 \hline
 \end{tabular}
 \end{center}
\caption{The frequencies $\mu_{GK}(I(\bfa))$ 
of the $6$ permutations $\bfa$ of the string $(3,1,4)$.}
\label{tab:table1}
\end{table}

In our first result we show that the symmetry property
\eqref{eq:symmetry} in fact characterizes the Gauss--Kuzmin measure
$\mu_{GK}$.

\begin{theorem}[Characterization of the Gauss--Kuzmin measure]
\label{thm:gk-characterization}
Let $S\subset \NN$ be an infinite set of positive integers. 
Let $\mu$ be a probability measure on $[0,1]$  with continuous density
function satisfying
\begin{equation}
\label{eq:symmetry-general}
\mu(I(\bfa))=\mu(I(\bfar))
\end{equation}
for all strings $\bfa=(a_1,\dots,a_n)$ of positive integers 
of length $n\in S$. Then
$\mu=\mu_{GK}$, i.e., $\mu$ is the measure with density function
$f(x)=(1/\log
2)(1+x)^{-1}$.
\end{theorem}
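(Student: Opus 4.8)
\medskip
\noindent\emph{Proposed approach.} The plan is to deduce from the reversal symmetry that the density $f$ of $\mu$ must satisfy the functional equation $(1+x)f(x)\equiv C$ for some constant $C$, and then to fix $C$ by the normalization $\int_0^1 f=1$. First I would record the standard description of the fundamental intervals: writing $p_k/q_k$ (in lowest terms) for the $k$-th convergent of $[0;a_1,\dots,a_n]$, the interval $I(\bfa)$ has endpoints $p_n/q_n=[0;a_1,\dots,a_n]$ and $(p_n+p_{n-1})/(q_n+q_{n-1})$ and length $1/(q_n(q_n+q_{n-1}))$, and one has the reversal identity $q_{n-1}/q_n=[0;a_n,\dots,a_1]$. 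Applying this identity to the reversed string $\bfar$ and comparing lowest-terms representations gives $q_n(\bfar)=q_n(\bfa)$ and $q_{n-1}(\bfar)=p_n(\bfa)$, whence $|I(\bfar)|=1/(q_n(q_n+p_n))$ and
\[
\frac{|I(\bfar)|}{|I(\bfa)|}=\frac{q_n+q_{n-1}}{q_n+p_n}=\frac{1+[0;a_n,\dots,a_1]}{1+[0;a_1,\dots,a_n]}.
\]

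Next I would convert the hypothesis into a pointwise relation. Since $f$ is continuous, the mean value theorem for integrals provides $\xi\in\overline{I(\bfa)}$ and $\xi'\in\overline{I(\bfar)}$ with $\mu(I(\bfa))=f(\xi)\,|I(\bfa)|$ and $\mu(I(\bfar))=f(\xi')\,|I(\bfar)|$. For every string $\bfa$ of length $n\in S$ the assumption $\mu(I(\bfa))=\mu(I(\bfar))$ then becomes
\[
f(\xi)\bigl(1+[0;a_1,\dots,a_n]\bigr)=f(\xi')\bigl(1+[0;a_n,\dots,a_1]\bigr),
\]
where $\xi$ and the endpoint $[0;a_1,\dots,a_n]$ both belong to $\overline{I(\bfa)}$, and $\xi'$ and the endpoint $[0;a_n,\dots,a_1]$ both belong to $\overline{I(\bfar)}$.

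The heart of the argument is to choose a sequence of strings that makes $I(\bfa)$ and $I(\bfar)$ collapse onto prescribed points while keeping the length in $S$. Fix irrational numbers $x_0,y_0\in(0,1)$ with infinite expansions $x_0=[0;b_1,b_2,\dots]$ and $y_0=[0;c_1,c_2,\dots]$. Using that $S$ is infinite, pick $n_m\in S$ with $n_m\to\infty$ and put $k_m=\lfloor n_m/3\rfloor$; define the length-$n_m$ string
\[
\bfa^{(m)}=\bigl(b_1,\dots,b_{k_m},\,\underbrace{1,\dots,1}_{n_m-2k_m},\,c_{k_m},c_{k_m-1},\dots,c_1\bigr).
\]
Its first $k_m$ digits are $b_1,\dots,b_{k_m}$, so $I(\bfa^{(m)})\subseteq I((b_1,\dots,b_{k_m}))$, a nested interval of diameter tending to $0$ and containing $x_0$; hence the associated $\xi_m$ and $[0;a_1^{(m)},\dots,a_{n_m}^{(m)}]$ both tend to $x_0$. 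Symmetrically, the first $k_m$ digits of $\bfar^{(m)}$ are $c_1,\dots,c_{k_m}$, so $\xi_m'$ and $[0;a_{n_m}^{(m)},\dots,a_1^{(m)}]$ both tend to $y_0$. Letting $m\to\infty$ in the displayed identity and invoking the continuity of $f$ yields
\[
f(x_0)(1+x_0)=f(y_0)(1+y_0)\qquad\text{for all irrational }x_0,y_0\in(0,1).
\]

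To finish, note that since the irrationals are dense in $[0,1]$ and $f$ is continuous, the function $g(x):=(1+x)f(x)$ is constant on $[0,1]$, say $g\equiv C$; thus $f(x)=C/(1+x)$, and $1=\int_0^1 f=C\log 2$ forces $C=1/\log 2$, so $f$ is the Gauss--Kuzmin density and $\mu=\mu_{GK}$. The step I expect to be the main obstacle is the construction in the third paragraph: one must govern the location of $I(\bfa)$ through a \emph{prefix} of $\bfa$ and the location of $I(\bfar)$ through a \emph{suffix} of $\bfa$ simultaneously, while remaining free to tune the total length so as to land in $S$. The middle block of $1$'s reconciles these demands, and it is precisely here that the hypothesis that $S$ is infinite is indispensable, since it supplies arbitrarily long admissible lengths and hence fundamental intervals that genuinely shrink to points.
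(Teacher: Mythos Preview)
Your argument is correct, and it proceeds along a genuinely different line from the paper's. The paper fixes a string $\bfa$ of length $n$ with $n+1\in S$, applies the symmetry to $(\bfa,t)$ versus $(t,\bfar)$, and lets $t\to\infty$; then $I(\bfa,t)$ collapses to the rational $r=[\bfa]$ while $I(t,\bfar)$ collapses to $0$, yielding $f(r)=f(0)/(1+r)$. A separate density step (approximating irrationals by convergents $r_n$ with $n+1\in S$) extends this to all of $[0,1]$. Your construction instead prescribes \emph{both} limit points simultaneously: a prefix of $\bfa^{(m)}$ forces $I(\bfa^{(m)})$ to collapse to any chosen irrational $x_0$, a suffix forces $I(\bfar^{(m)})$ to collapse to any chosen irrational $y_0$, and the middle block of $1$'s absorbs the length constraint $n_m\in S$. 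This gives the constancy of $(1+x)f(x)$ in a single stroke, with no distinguished basepoint and no second density argument. The paper's route has the simpler string manipulation (append one large digit), while yours is more symmetric and makes the role of the infiniteness of $S$ especially transparent: it is exactly what allows the padding block to be long enough for the prefix and suffix to grow without bound.
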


This characterization is best-possible\footnote{But see Remark
\ref{rem:thm1refinements} for comments on possible refinements of the
statement of the theorem.}
in the sense that if we impose 
the symmetry property \eqref{eq:symmetry-general} only for strings
$\bfa$ of length from a \emph{finite} set $S$, the conclusion
$\mu=\mu_{GK}$ need not hold:

\begin{theorem}[Optimality of the characterization]
\label{thm:gk-characterization-optimality}
Let $N\in \NN$ be given. Then there exists 
a probability measure $\mu$ on $[0,1]$ with continuous density
function that satisfies \eqref{eq:symmetry-general} 
for all strings $\bfa=(a_1,\dots,a_n)$ of positive integers 
of length $n\le N$, but $\mu\not=\mu_{GK}$.
\end{theorem}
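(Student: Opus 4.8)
The plan is to construct, for each fixed $N$, a measure $\mu$ whose density is a small perturbation of the Gauss--Kuzmin density, chosen so that the symmetry defect $\mu(I(\bfa))-\mu(I(\bfar))$ vanishes identically for all strings of length at most $N$ but the density is genuinely different from $(1/\log 2)(1+x)^{-1}$. The natural way to organize this is to work with the ``defect functional'' $D_\mu(\bfa):=\mu(I(\bfa))-\mu(I(\bfar))$ and observe that it is linear in $\mu$, hence linear in the density $f$. Writing $f=(1/\log 2)(1+x)^{-1}+g$ for a continuous perturbation $g$ with $\int_0^1 g=0$ (so that $\mu$ remains a probability measure), the Gauss--Kuzmin part contributes zero to every defect by the Symmetry Property (Proposition~\ref{lem:symmetry-property}), so $D_\mu(\bfa)=\int_{I(\bfa)}g-\int_{I(\bfar)}g$. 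The goal becomes: find a nonzero continuous $g$ with mean zero on $[0,1]$ such that $\int_{I(\bfa)}g=\int_{I(\bfar)}g$ for every string $\bfa$ of length $\le N$.

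First I would record that the sets $I(\bfa)$ for strings $\bfa$ of length exactly $k$ form a partition of $[0,1)$ (up to the measure-zero set of endpoints) into intervals, and as $k$ ranges over $1,\dots,N$ these partitions are nested and refine one another. Consequently the linear span of the indicator functions $\mathbf{1}_{I(\bfa)}$ over all $\bfa$ with $|\bfa|\le N$ is exactly the (finite-dimensional) space of functions that are constant on each cell of the finest partition, i.e. on each $I(\bfb)$ with $|\bfb|=N$. The conditions $\int_{I(\bfa)}g=\int_{I(\bfar)}g$ are therefore a \emph{finite} system of homogeneous linear equations on $g$, one for each orbit-pair under reversal among strings of length $\le N$; together with the single mean-zero constraint this is still finitely many linear conditions. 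Since the space of continuous functions on $[0,1]$ is infinite-dimensional, any finite set of linear constraints has a nonzero continuous solution $g$; take such a $g$, and, if necessary, scale it down so that $f=(1/\log 2)(1+x)^{-1}+g$ stays positive on $[0,1]$ (possible since the Gauss--Kuzmin density is bounded below by $1/(2\log 2)>0$ and $g$ is bounded). This $f$ is a continuous probability density, the associated measure $\mu$ satisfies \eqref{eq:symmetry-general} for all $|\bfa|\le N$ by construction, and $\mu\ne\mu_{GK}$ because $g\not\equiv 0$.

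Alternatively, and perhaps more cleanly for an explicit write-up, I would exhibit $g$ directly: pick any string $\bfb_0$ of length $N+1$ whose reverse $\overleftarrow{\bfb_0}$ is a different string (e.g. $\bfb_0=(1,1,\dots,1,2)$), and let $g$ be a function supported on $I(\bfb_0)\cup I(\overleftarrow{\bfb_0})$ equal to $+c$ times a suitable bump on the first interval and $-c$ times a bump on the second, normalized to have total integral zero. Because $\bfb_0$ and its reverse both refine, in the length-$N$ partition, into subintervals lying inside a single length-$N$ cell only if that cell already distinguishes them --- one must check that $I(\bfb_0)$ and $I(\overleftarrow{\bfb_0})$ lie in length-$N$ cells $I(\bfa)$ and $I(\bfa')$ with $\bfa,\bfa'$ not related by reversal, or else choose $\bfb_0$ long enough (length $N+1$ suffices by a short combinatorial check on prefixes). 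Then for every $\bfa$ with $|\bfa|\le N$, either $I(\bfa)$ contains neither support interval, or it contains exactly one of them entirely and $\int_{I(\bfa)}g=0$ by the zero-sum normalization; the same holds for $I(\bfar)$, so the defect vanishes. Smoothing the bumps makes $f$ continuous.

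The main obstacle is the bookkeeping in the explicit construction: one must verify that the perturbation interval $I(\bfb_0)$ is ``invisible'' to every length-$\le N$ symmetry condition, which amounts to checking that no length-$N$ cell $I(\bfa)$ can contain $I(\bfb_0)$ while the reversed cell $I(\bfar)$ simultaneously contains $I(\overleftarrow{\bfb_0})$ in a way that reintroduces a defect --- this is where choosing $|\bfb_0|>N$ and using the nested structure of the partitions does the work. The abstract linear-algebra argument sidesteps this entirely at the cost of non-constructiveness, so I would lead with the linear-algebra version and, if space permits, append the explicit perturbation as a remark.
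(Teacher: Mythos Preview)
Your linear-algebra framework is sound in spirit, but the key claim that the symmetry constraints form a \emph{finite} system is false: for each length $k\le N$ there are infinitely many strings $\bfa\in\NN^k$ (the digits are unbounded positive integers), so the level-$N$ partition of $[0,1)$ has countably many cells and the span of the indicators $\mathbf{1}_{I(\bfa)}$ is infinite-dimensional. A bare dimension count therefore does not produce a nonzero $g$ in the common kernel.

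Your explicit construction also has a gap. Under the natural reading---each ``bump'' a nonnegative function, with the \emph{total} integral of $g$ normalized to zero---the individual integrals $\int_{I(\bfb_0)}g=+c$ and $\int_{I(\overleftarrow{\bfb_0})}g=-c$ are nonzero. With $\bfb_0=(1,\dots,1,2)$ of length $N+1$ and $N\ge2$, take $\bfa=(2,1,\dots,1)$ of length $N$: then $I(\bfa)\supset I(\overleftarrow{\bfb_0})$, so $\int_{I(\bfa)}g=-c$, while $\bfar=(1,\dots,1,2)$ is a prefix of neither $\bfb_0$ nor $\overleftarrow{\bfb_0}$, giving $\int_{I(\bfar)}g=0$. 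The defect is $-c\ne0$. (Your ``bookkeeping'' worry concerned the case where $I(\bfa)$ and $I(\bfar)$ each catch one support interval; the actual obstruction is when one of them catches a support interval and the other catches neither.)

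The fix is simpler than either route and is exactly what the paper does: take $g$ supported on a \emph{single} length-$N$ cell $I(\bfa_0)$, continuous, vanishing at the endpoints of that cell, and with $\int_{I(\bfa_0)}g=0$. For every string $\bfa$ of length at most $N$, nesting forces $I(\bfa)$ either to contain $I(\bfa_0)$ or to be disjoint from it, so $\int_{I(\bfa)}g=0$ in either case; hence $\mu$ agrees with $\mu_{GK}$ on every such $I(\bfa)$, and the symmetry \eqref{eq:symmetry-general} for lengths $\le N$ is inherited directly from the symmetry property of $\mu_{GK}$.
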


In the example shown in Table \ref{tab:table1}, it is the case that two
different permutations of $(3,1,4)$ have the same frequency \emph{if and only
if} one is the reverse of the other.   We next explore the question
whether this holds for more general strings. 

Given a permutation $\sigma$ of the indices $1,2,\dots,n$ and a string
$\bfa=(a_1,\dots,a_n)$, 
let $\sigma(\bfa)$ denote the permutation of $\bfa=(a_1,\dots,a_n)$ 
induced by $\sigma$, i.e.,
\begin{equation}
\notag
\sigma(\bfa)=(a_{\sigma(1)},\dots,a_{\sigma(n)}).
\end{equation}

\begin{definition}[Non-trivial symmetries]
\label{def:nontrivial-symmetries}
Let $\bfa$ be a string of positive integers of 
length $n$. We say that the string $\bfa$ has a
\emph{nontrivial symmetry} if there exists a permutation $\sigma$ of
$1,\dots,n$ with $\sigma(\bfa)\not=\bfa$ and $\sigma(\bfa)\not=\bfar$ 
such that
\begin{equation}
\notag
P_{GK}(\sigma(\bfa))=P_{GK}(\bfa).
\end{equation}
\end{definition}

From Table \ref{tab:table1} we see that the string $(3,1,4)$ has \emph{no}
nontrivial symmetries. This raises the question of whether the same 
holds for more general strings.  For strings of length $2$, this is
trivially the case as the only permutations of such a string 
are the string itself and
its reverse.  The question becomes nontrivial for strings of length 
$3$ and larger.  We prove the following result. 

\begin{theorem}[Strings with nontrivial symmetries]
\label{thm:nontrivial-symmetries}
\mbox{}
\begin{itemize}
\item[(i)]
There exists no  string  $\bfa$ of length $3$
with a nontrivial symmetry.
\item[(ii)]
For each $n\ge 4$ there exists an infinite, 
$\lfloor (n-2)/2\rfloor$-parameter, family of strings 
$\bfa$ of length $n$ that have a nontrivial symmetry.
\end{itemize}
\end{theorem}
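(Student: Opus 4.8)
We sketch the approach. Throughout, write $p_k/q_k=[0;a_1,\dots,a_k]$ for the convergents of $[0;a_1,\dots,a_n]$ and let $K$ be the continuant polynomial, normalized so that $q_m=K(a_1,\dots,a_m)$; recall that $K$ is invariant under reversing its argument string.

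The first step is to reduce the whole theorem to a statement about this polynomial. The set $I(\bfa)$ is the interval with endpoints $p_n/q_n=[0;a_1,\dots,a_n]$ and $(p_n+p_{n-1})/(q_n+q_{n-1})=[0;a_1,\dots,a_{n-1},a_n+1]$, so evaluating \eqref{eq:mugk-def} and using $p_nq_{n-1}-p_{n-1}q_n=\pm 1$ gives
\[
P_{GK}(\bfa)=\Bigl|\log_2\Bigl(1+\tfrac{(-1)^{n}}{D(\bfa)}\Bigr)\Bigr|,\qquad
D(\bfa):=(q_{n-1}+q_n)(p_n+q_n)=K(a_1,\dots,a_{n-1},a_n+1)\cdot K(a_1+1,a_2,\dots,a_n).
\]
This is precisely the identity behind Lemma \ref{lem:gk-formulas}: since $K$ is reversal-invariant one has $D(\bfar)=D(\bfa)$, which is the Symmetry Property. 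Because $t\mapsto|\log_2(1+(-1)^n/t)|$ is injective on integers $t\ge 2$ and $\sigma(\bfa)$ has the same length as $\bfa$, we get $P_{GK}(\sigma(\bfa))=P_{GK}(\bfa)$ if and only if $D(\sigma(\bfa))=D(\bfa)$; thus Theorem \ref{thm:nontrivial-symmetries} becomes a question about when $D$ takes equal values on two permutations of a string. The one computational input needed is the elementary ``swap'' identity for continuants, $K(x,y,c_3,\dots,c_m)-K(y,x,c_3,\dots,c_m)=(x-y)\,K(c_4,\dots,c_m)$ (with $K$ of the empty string $=1$), together with the companion rule $K(a+1,b,\mathbf{c})-K(b+1,a,\mathbf{c})=(a-b)\bigl(K(\mathbf{c}')-K(\mathbf{c})\bigr)$, where $\mathbf{c}'$ is $\mathbf{c}$ with its first entry deleted; both follow from the three-term recursion for $K$. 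Feeding these into the two continuant factors of $D$ shows that if $\bfb$ is obtained from $\bfa$ by transposing two adjacent entries $a_i,a_{i+1}$, then $D(\bfa)-D(\bfb)=(a_i-a_{i+1})\,\Theta$ with $\Theta$ an explicit polynomial in the $a_j$.

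For part (i): every permutation of a length-$3$ string $\bfa=(a_1,a_2,a_3)$ other than $\bfa$ and $\bfar$ is, after possibly composing with reversal (which preserves $D$), the transposition of two \emph{unequal} adjacent entries of $\bfa$. For the swap of the first two entries the rules above give $D(a_1,a_2,a_3)-D(a_2,a_1,a_3)=(a_1-a_2)\Theta$ with $\Theta=K(a_1+1,a_2,a_3)-(a_3-1)K(a_2,a_1,a_3+1)$, and a short case distinction on $a_3$ shows $\Theta\neq 0$ for all
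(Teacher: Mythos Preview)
Your reduction of the problem to the quantity $D(\bfa)=(q_{n-1}+q_n)(p_n+q_n)$ is exactly the paper's ``characteristic number'' $\chi(\bfa)$ (Lemma~\ref{lem:char-number}), and your continuant swap identities reproduce the paper's polynomial computation in Lemma~\ref{lem:proof1.3-lem1}: expanding your $\Theta=K(a_1{+}1,a_2,a_3)-(a_3{-}1)K(a_2,a_1,a_3{+}1)$ in terms of $S=abc+a+b+c+1$ gives $\Theta=(2{-}c)S+(c{-}1)(a{+}b{-}ab)$, so $\Theta=0$ is precisely the paper's equation~\eqref{eq:char-number-formula3}, and the ``short case distinction on $a_3$'' you allude to is the same three cases $c=1$, $c=2$, $c\ge 3$ carried out there. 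So for part~(i) your approach is correct and essentially identical to the paper's, only phrased in continuant language; the text is simply cut off mid-sentence before the cases are done.

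The real gap is part~(ii): your proposal does not address it at all. This is the constructive half of the theorem, and it requires producing, for every $n\ge 4$, an explicit $\lfloor(n{-}2)/2\rfloor$-parameter family of strings together with a nontrivial permutation preserving $D$. Nothing in your transposition-difference framework suggests how to find such strings; knowing that $D(\bfa)-D(\sigma(\bfa))$ factors nicely for adjacent swaps does not by itself yield solutions to $D(\bfa)=D(\sigma(\bfa))$ for longer strings. The paper's mechanism is a genuinely new idea: it introduces \emph{stable} strings (those whose convergent matrix satisfies $p=2q'$), shows that stability is preserved under the operation $\bfa\mapsto(t,\bfar,2t)$, seeds the induction with the one-parameter families $(t,2t)$ and $(t,1,2t{+}1)$, and then proves (Lemma~\ref{lem:stable-to-nontrivial-symmetries}) that for any stable $\bfa$ the string $\bfap=(2,1,a_1,\dots,a_{n-1},a_n{+}1)$ satisfies $\chi(\bfap)=\chi(\sigma(\bfap))$ for the permutation that reverses its last $n{+}1$ digits. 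Your sketch needs an analogous construction before it can claim to prove the theorem.
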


In Section \ref{sec:numerical-data} we present numerical data 
suggesting that strings with nontrivial symmetries are quite rare
in the following sense.

\begin{conjecture}[Strings with nontrivial symmetries]
\label{conj:nontrivial-symmetries}
Let $n\ge4$ be given. Then 
\begin{equation}
\notag
\lim_{N\to\infty}\frac1{N^n}\#\left\{\bfa\in \{1,\dots,N\}^n: 
\text{$\bfa$ has a nontrivial symmetry}\right\}=0.
\end{equation}
\end{conjecture}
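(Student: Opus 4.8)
The plan is to reduce the conjecture to counting the integer zeros of a polynomial. First I would make $P_{GK}(\bfa)$ completely explicit. Writing $K(b_1,\dots,b_m)$ for the continuant (the denominator of $[0;b_1,\dots,b_m]$) and using that $I(\bfa)$ is the interval with endpoints $p_n/q_n=[0;a_1,\dots,a_n]$ and $(p_n+p_{n-1})/(q_n+q_{n-1})$ together with the identity $p_nq_{n-1}-p_{n-1}q_n=(-1)^{n-1}$, a short computation yields
\[
P_{GK}(\bfa)=\left|\log_2\!\left(1+\frac{(-1)^{n-1}}{D(\bfa)}\right)\right|,
\]
where
\[
D(\bfa):=K(a_1,\dots,a_n)\cdot K(1,a_1,\dots,a_n,1)
\]
(for instance $D((3,1,4))=19\cdot29=551$). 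For strings of a \emph{fixed} length $n$ the equality $P_{GK}(\sigma(\bfa))=P_{GK}(\bfa)$ is equivalent to $D(\sigma(\bfa))=D(\bfa)$, since $t\mapsto\bigl|\log_2(1+(-1)^{n-1}/t)\bigr|$ is strictly decreasing on $t\ge2$ and $D(\bfa)\ge2$. The crucial point is that $D$ is a genuine polynomial in $a_1,\dots,a_n$, of degree $2n$ with nonnegative integer coefficients; so for each permutation $\sigma$ of $\{1,\dots,n\}$ the difference $F_\sigma:=D\circ\sigma-D$ is a polynomial of degree at most $2n$.

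Next I would carry out the counting. Let $\rho$ be the reversal permutation, $\rho(i)=n+1-i$. If $\bfa\in\{1,\dots,N\}^n$ has pairwise distinct entries, then $\sigma(\bfa)=\bfa$ forces $\sigma=\mathrm{id}$ and $\sigma(\bfa)=\bfar$ forces $\sigma=\rho$, so if such an $\bfa$ has a nontrivial symmetry then $F_\sigma(\bfa)=0$ for some $\sigma\in S_n\setminus\{\mathrm{id},\rho\}$. Since only $O(N^{n-1})$ strings in $\{1,\dots,N\}^n$ have a repeated entry, the number of $\bfa\in\{1,\dots,N\}^n$ with a nontrivial symmetry is at most $O(N^{n-1})+\sum_{\sigma\in S_n\setminus\{\mathrm{id},\rho\}}N_\sigma$, where $N_\sigma:=\#\{\bfa\in\{1,\dots,N\}^n:F_\sigma(\bfa)=0\}$. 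Now if $F_\sigma\not\equiv0$, then by the Schwartz--Zippel lemma (or an elementary induction on the number of variables) $N_\sigma\le(\deg F_\sigma)N^{n-1}=O_n(N^{n-1})$; summing the finitely many terms gives a bound $O_n(N^{n-1})=o(N^n)$, which is exactly what the conjecture asserts (and in fact shows the density there is $O_n(1/N)$).

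Everything therefore rests on the claim that $F_\sigma\not\equiv0$ for every $\sigma\in S_n\setminus\{\mathrm{id},\rho\}$, i.e.\ that the polynomial $D$ is left invariant by no permutation other than the identity and the reversal; this is the step I expect to be the main obstacle. My plan for it is to extract the homogeneous component $[D]_{2n-2}$ of $D$ of degree $2n-2$ in $a_1,\dots,a_n$. Using the combinatorial description of a continuant as the sum, over all ways of deleting pairwise-disjoint adjacent entries, of the product of the entries that remain, one checks that $K(a_1,\dots,a_n)$ and $K(1,a_1,\dots,a_n,1)$ each have top-degree part $e:=a_1\cdots a_n$, that the degree-$(n-2)$ part of $K(a_1,\dots,a_n)$ is $\sum_{i=1}^{n-1}e/(a_ia_{i+1})$, and that the degree-$(n-2)$ part of $K(1,a_1,\dots,a_n,1)$ is $\sum_{i=1}^{n-1}e/(a_ia_{i+1})+e/(a_1a_n)$; multiplying these out gives
\[
[D]_{2n-2}=e^2\left(2\sum_{i=1}^{n-1}\frac{1}{a_ia_{i+1}}+\frac{1}{a_1a_n}\right).
\]
Since $e^2$ is symmetric and, for $n\ge3$, the monomials $e^2/(a_ia_{i+1})$ $(1\le i\le n-1)$ and $e^2/(a_1a_n)$ are pairwise distinct, any permutation fixing $[D]_{2n-2}$ must fix the ``extra'' pair $\{1,n\}$ and permute the pairs $\{i,i+1\}$ among themselves; in other words it must be an automorphism of the $n$-cycle $C_n$ on $\{1,\dots,n\}$ (the path $1,2,\dots,n$ together with the edge $\{1,n\}$) that fixes the distinguished edge $\{1,n\}$. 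As $\operatorname{Aut}(C_n)$ is dihedral of order $2n$ and acts transitively on the $n$ edges, the stabilizer of $\{1,n\}$ has order $2$ and equals $\{\mathrm{id},\rho\}$, forcing $\sigma\in\{\mathrm{id},\rho\}$. What remains is routine: carefully justifying the two low-degree continuant expansions, and treating the small cases (the argument requires $n\ge3$ so that $\{1,n\}$ is distinct from the path edges). The conceptual heart is the appearance of the cycle $C_n$, whose only edge-fixing automorphisms are the identity and the reversal.
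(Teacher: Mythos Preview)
The paper does \emph{not} prove this statement: it is explicitly stated as a conjecture, supported only by the numerical data in Section~\ref{sec:numerical-data} and listed among the open problems in Section~\ref{sec:concluding-remarks}. There is therefore no proof in the paper to compare your proposal against.

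That said, your argument appears to be a complete and correct proof of the conjecture, and in fact yields the quantitative bound $O_n(1/N)$ for the density, which is stronger than Conjecture~\ref{conj:nontrivial-symmetries} and also settles the paper's Conjecture~\ref{conj:deltaNn}. The reduction is sound: your $D(\bfa)=K(a_1,\dots,a_n)\,K(1,a_1,\dots,a_n,1)$ differs from the paper's characteristic number $\chi(\bfa)=(p+q)(q'+q)$ by exactly $(-1)^n$ (this is the Euler--Desnanot identity for continuants), so for fixed $n$ the equality $P_{GK}(\sigma(\bfa))=P_{GK}(\bfa)$ is indeed equivalent to $D(\sigma(\bfa))=D(\bfa)$, and each $F_\sigma=D\circ\sigma-D$ is a polynomial of degree at most $2n$. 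Once $F_\sigma\not\equiv0$ for all $\sigma\notin\{\mathrm{id},\rho\}$, Schwartz--Zippel gives the $O_n(N^{n-1})$ count.

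Your computation of $[D]_{2n-2}=e^2\bigl(2\sum_{i=1}^{n-1}(a_ia_{i+1})^{-1}+(a_1a_n)^{-1}\bigr)$ from the matching expansion of continuants checks out, and the cycle-automorphism argument correctly pins down $\sigma\in\{\mathrm{id},\rho\}$ for $n\ge3$. A slight simplification is available: the degree-$(2n-1)$ component $[D]_{2n-1}=e^2\bigl(a_1^{-1}+a_n^{-1}\bigr)$ already forces $\{\sigma(1),\sigma(n)\}=\{1,n\}$, after which the coefficient-$2$ terms in $[D]_{2n-2}$ force $\sigma$ to preserve the path edges $\{i,i+1\}$, so $\sigma\in\operatorname{Aut}(P_n)=\{\mathrm{id},\rho\}$ directly, without passing through the dihedral group. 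Also, the detour through strings with distinct entries is not needed: if $\bfa$ has a nontrivial symmetry realized by some $\sigma$, then $\sigma(\bfa)\notin\{\bfa,\bfar\}$ already rules out $\sigma\in\{\mathrm{id},\rho\}$, so $F_\sigma(\bfa)=0$ for some $\sigma\notin\{\mathrm{id},\rho\}$ regardless of repetitions.

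In short, your proposal goes well beyond the paper: it resolves what the authors leave open.
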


This may be interpreted as saying that almost all strings $\bfa$ have
no nontrivial symmetries.

\section{Background on Continued Fractions}
\label{sec:background}

\subsection{Continued Fraction Basics}
We begin by recalling some key definitions and facts from the elementary
theory of continued fractions.  Details and proofs can be found, for
example,  in
\cite[Chapter 2]{neverending-fractions-book}, 
\cite[Section 1.3]{dajani-kraaikamp-book},
\cite[Chapter
9]{hardy-wright2008}, \cite[Chapters I--II]{khinchin-book}, and
\cite[Chapter 5]{niven1956}. 

A \emph{continued fraction} is a finite or infinite expression of the form
\begin{equation}
\label{eq:cf-def1}
a_0+\cfrac{1}{a_1+\cfrac{1}{a_2+\cfrac{1}{\ddots}}} = [a_0;
a_1,a_2,\dots],
\end{equation}
where $a_0$ is an arbitrary integer, and $a_1,a_2,\dots $ are positive
integers.

Clearly, any \emph{finite} (i.e., terminating) continued fraction
represents a rational number. Conversely, any rational number can be
represented as a finite continued fraction $[a_0;a_1,\dots,a_n]$.
There is a slight ambiguity in this representation due to the identity 
\begin{equation}
\label{eq:cf-nonuniqueness}
[a_0; a_1,\dots,a_n]=[a_0;a_1,\dots,a_n-1,1]
\quad\text{if $a_n>1$.}
\end{equation}
This ambiguity can be eliminated by requiring the last digit in the
representation to be strictly greater than $1$.

An \emph{infinite} continued fraction is defined as the limit, as
$n\to\infty$,  of the finite continued fractions obtained by truncating
the given infinite continued fraction after $n$ terms:
\begin{equation}
\label{eq:cf-infinite-def}
[a_0;a_1,a_2,\dots]=
\lim_{n\to\infty} [a_0;a_1,a_2,\dots,a_n].
\end{equation}
It is known (see, e.g., \cite[Theorem 5.11]{niven1956})
that any \emph{irrational} real number has a unique infinite
continued fraction expansion; that is, there exists a unique integer
$a_0$ and and unique positive integers  
$a_1,a_2,\dots$ such that
\begin{equation}
\notag
x= \lim_{n\to\infty} [a_0;a_1,a_2,\dots,a_n].
\end{equation}
Conversely, given any integer $a_0$ and any sequence
$a_1,a_2,\dots$ of positive integers, the limit
\eqref{eq:cf-infinite-def} exists and represents an irrational number.

The \emph{convergents} of a (finite or infinite) continued fraction 
$[a_0;a_1,a_2,\dots]$
are the rational numbers obtained by truncating the continued fraction 
after finitely many terms. The $n$th convergent is the continued
fraction truncated at $a_n$ and is 
denoted by $p_n/q_n$; that is, $p_n$ and $q_n$ are integers
satisfying  
\begin{equation}
\notag
[a_0;a_1,\dots,a_n]=\frac{p_n}{q_n},
\end{equation}
with the convention that
\begin{equation}
\label{eq:pn-qn-conventions}
\begin{cases}
p_0=a_0,\ q_0=1,&
\\
p_n\in\ZZ,\ q_n\in\NN,\quad (p_n,q_n)=1& (n\ge 1).
\end{cases}
\end{equation}
The numbers $p_n$ and $q_n$ can be computed recursively in terms of the digits
$a_i$, or equivalently through the matrix identity (see, for example,
{\cite[Section 1.3.2]{dajani-kraaikamp-book}}
or---in a slightly different,
but equivalent form---\cite[\S 5]{perron-book})
\begin{equation}
\label{eq:key0}
\begin{pmatrix} 1&a_0\\ 0 & 1\end{pmatrix}
\mat{a_1}\dots \mat{a_n}
=\begin{pmatrix} p_{n-1} & p_{n} \\ q_{n-1} &q_{n}\end{pmatrix}
\quad (n\ge 1).
\end{equation} 

Taking the determinant on both sides of \eqref{eq:key0} yields the
identity
\begin{equation}
\label{eq:determinant-formula}
p_nq_{n-1}-q_np_{n-1}=(-1)^{n-1}
\quad (n\ge 1).
\end{equation}

In the remainder of this paper we will focus on 
continued fractions with leading term $a_0=0$, i.e., continued fractions 
representing numbers in the unit interval $[0;1)$. 
In this case, we suppress the term $a_0(=0)$ in the continued fraction 
notation \eqref{eq:cf-def1} and thus write
\begin{equation}
\notag
[a_1,a_2,\dots]= [0; a_1,a_2,\dots]
=\cfrac{1}{a_1+\cfrac{1}{a_2+\cfrac{1}{\ddots}}} \ .
\end{equation}
Note that when $a_0=0$, we have,  by \eqref{eq:pn-qn-conventions},
\begin{equation}
\notag
p_0=0,\quad q_0=1.
\end{equation}

Given a finite string $\bfa=(a_1,\dots,a_n)$ of positive integers, we let 
\begin{equation}
\notag
[\bfa]=[a_1,\dots,a_n]
\left(=[0;a_1,\dots,a_n]\right)
\end{equation}
denote the continued fraction with digits given by this string. 
Given two finite strings 
$\bfa=(a_1,\dots,a_n)$ and $\bfb=(b_1,\dots,b_m)$ of positive integers,
we denote by $(\bfa,\bfb)=(a_1,\dots,a_n,b_1,\dots,b_m)$ the
concatenation of these strings and by 
$[\bfa,\bfb]=[a_1,\dots,a_n,b_1,\dots,b_m]$
the corresponding continued fraction.
In the case when the string $\bfb=(b)$ consists of a single digit $b$, we 
suppress the parentheses around $b$ and write $(\bfa,b)$ and $[\bfa,b]$
instead of $(\bfa,(b))$ and $[\bfa,(b)]$, respectively;
$(b,\bfa)$ and $[b,\bfa]$ are to be interpreted analogously.

\subsection{Convergent Matrices}
It will be convenient to encode the last two convergents of a finite
continued fraction as a $2\times 2$ matrix, defined as follows. 

\begin{definition}[Convergent matrix]
\label{def:convergent-matrix}
Given a finite string $\bfa=(a_1,\dots,a_n)$ of positive integers, 
the \emph{convergent matrix} associated with this string (or with the
continued fraction, $[\bfa]$, defined by this string)
is  the matrix $C(\bfa)$ defined by 
\begin{equation}
\label{eq:def-convergent-matrix}
C(\bfa)=
\begin{pmatrix} p_{n-1} & p_n\\ q_{n-1} & q_n\end{pmatrix},
\end{equation}
where $p_{n-1}/q_{n-1}$ and $p_n/q_n$ denote the 
$(n-1)$th and $n$th convergents 
of the continued fraction $[\bfa]=[a_1,\dots,a_n]$.
\end{definition}

Note that the matrix $C(\bfa)$ defined in \eqref{eq:def-convergent-matrix}
is exactly the matrix appearing on the right side of the identity
\eqref{eq:key0}.  Since, by our assumption, $a_0=0$, 
the first matrix on the left of 
\eqref{eq:key0} reduces to the identity matrix, so
\eqref{eq:key0} simplifies to
\begin{equation}
\label{eq:key1}
C(\bfa)=\mat{a_1}\dots \mat{a_n}.
\end{equation}

We next derive explicit formulas for the convergent matrix $C(\bfar)$ 
associated with the reverse $\bfar$ of a string $\bfa$, and the
convergent matrices $C(\bfa,t)$ and $C(t,\bfa)$ associated with the  
strings $(\bfa,t)$ and $(t,\bfa)$, obtained by appending or prepending a
single digit $t$ to the string $\bfa$.

\begin{lemma}[Explicit formulas for convergent matrices]
\label{lem:convergent-matrices}
Let $\bfa=(a_1,\dots,a_n)$
be a string of positive integers, 
and denote the convergent matrix of
$\bfa$ by  
\begin{equation}
\label{eq:convergent-matrix-alt}
C(\bfa)=
\begin{pmatrix} p'&p\\q'&q \end{pmatrix}
\end{equation}
(so that $(p,q)=(p_n,q_n)$ and $(p',q')=(p_{n-1},q_{n-1})$).
Then we have:
\begin{itemize}
\item[(i)]
$\displaystyle C(\bfar)=
\begin{pmatrix} p' & q'\\p&q\end{pmatrix}$;

\item[(ii)]
$\displaystyle C(\bfa,t)=
\begin{pmatrix} p & p'+tp\\ q&q'+tq \end{pmatrix}
\quad (t\in\NN)$;

\item[(iii)]
$\displaystyle 
C(t,\bfa)=
\begin{pmatrix} q' & q \\ p'+tq'& p+tq \end{pmatrix}
\quad (t\in\NN)$;

\item[(iv)]
$\displaystyle
C(t,\bfar)=
\begin{pmatrix} p & q \\ p'+tp& q'+tq \end{pmatrix}
\quad (t\in\NN)$.
\end{itemize}
\end{lemma}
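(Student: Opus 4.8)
The plan is to prove all four identities by reducing them to the single matrix identity \eqref{eq:key1}, namely $C(\bfa)=\mat{a_1}\cdots\mat{a_n}$, together with elementary properties of the elementary matrices $M(t):=\mat{t}$. The key structural observation is that each $M(t)$ is symmetric, and that the ``swap'' matrix $J=\left(\begin{smallmatrix}0&1\\1&0\end{smallmatrix}\right)$ satisfies $J^2=I$ and conjugates or reflects $M(t)$ in a useful way. I would set up this notation first, record that $M(t)^{T}=M(t)$, and then handle the four parts in an order where each builds on the previous one.

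For part (i), I would start from $C(\bfar)=M(a_n)\cdots M(a_1)$, which is exactly the \emph{reverse} of the product defining $C(\bfa)$; taking transposes and using $M(t)^T=M(t)$ gives $C(\bfar)=\bigl(M(a_1)\cdots M(a_n)\bigr)^{T}=C(\bfa)^{T}$. Writing $C(\bfa)=\left(\begin{smallmatrix}p'&p\\q'&q\end{smallmatrix}\right)$ as in \eqref{eq:convergent-matrix-alt}, this immediately yields the claimed form $\left(\begin{smallmatrix}p'&q'\\p&q\end{smallmatrix}\right)$. For part (ii), I would use $C(\bfa,t)=C(\bfa)\,M(t)$ and just multiply out $\left(\begin{smallmatrix}p'&p\\q'&q\end{smallmatrix}\right)\left(\begin{smallmatrix}0&1\\1&t\end{smallmatrix}\right)$, which gives the stated matrix directly. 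For part (iii), I would write $C(t,\bfa)=M(t)\,C(\bfa)$ and multiply $\left(\begin{smallmatrix}0&1\\1&t\end{smallmatrix}\right)\left(\begin{smallmatrix}p'&p\\q'&q\end{smallmatrix}\right)$ to obtain $\left(\begin{smallmatrix}q'&q\\p'+tq'&p+tq\end{smallmatrix}\right)$, matching the claim.

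Part (iv) I would get most cleanly by combining the earlier parts rather than recomputing: since $(t,\bfar)$ is the reverse of $(\bfa,t)$, part (i) applied to the string $(\bfa,t)$ gives $C(t,\bfar)=C(\bfa,t)^{T}$, and transposing the matrix from part (ii) yields $\left(\begin{smallmatrix}p&q\\p'+tp&q'+tq\end{smallmatrix}\right)$, exactly as stated. (Alternatively one can apply part (iii) to $\bfar$ and use part (i) to re-express $C(\bfar)$, but routing through (i)+(ii) is shortest.)

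**The main obstacle** is not conceptual — every step is a two-by-two matrix multiplication or transposition — but rather bookkeeping: one must be careful about which column of $C(\bfa)$ holds the $n$th convergent versus the $(n-1)$th, since appending a digit shifts the indices, and one must confirm that the $(p_n,q_n)=1$ and sign conventions in \eqref{eq:pn-qn-conventions}–\eqref{eq:determinant-formula} are consistent with reading off $p',p,q',q$ from the product without extra sign corrections. I would therefore include one sentence verifying that for the empty/length-one base case the identity \eqref{eq:key1} reproduces the conventions $p_0=0,q_0=1$, so that the inductive reading of entries is unambiguous; after that, the four computations are routine and can be presented in a few lines each.
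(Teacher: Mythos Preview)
Your proposal is correct and follows essentially the same approach as the paper: parts (i)--(iii) are proved exactly as in the paper via the product identity \eqref{eq:key1}, the symmetry of the elementary matrices, and direct $2\times 2$ multiplication. The only (minor) difference is in part (iv), where the paper multiplies $M(t)\,C(\bfar)$ directly using (i), whereas you obtain it as $C(\bfa,t)^{T}$ by combining (i) and (ii); both routes are one-line computations.
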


\begin{proof}
(i).
Taking the transpose on both sides of the identity \eqref{eq:key1}
and noting that the matrices $\mat{a_i}$ are symmetric, we obtain
\begin{align*}
C(\bfa)^T&=\left(\mat{a_1}\dots \mat{a_n}\right)^T
=\mat{a_n}^T\dots \mat{a_1}^T
\\
& =\mat{a_n}\dots \mat{a_1} =C(\bfar).
\end{align*}
Since 
\begin{equation*}
C(\bfa)^T=
\begin{pmatrix}p'&p\\q'&q\end{pmatrix}^T
=\begin{pmatrix} p' & q'\\p&q\end{pmatrix},
\end{equation*}
this proves (i).

\medskip

To prove (ii)--(iv), let $t$ be a positive integer and apply the
identity \eqref{eq:key1} to the strings $(\bfa,t)=(a_1,\dots,a_n,t)$,
$(t,\bfa)=(t,a_1,\dots,a_n)$, and $(t,\bfar)=(t,a_n,\dots,a_1)$.
We obtain 
\begin{align*}
C(\bfa,t)
&=\mat{a_1}\dots\mat{a_n}\mat{t}  
=C(\bfa) 
\begin{pmatrix} 0&1\\1&t\end{pmatrix}
\\
&=\begin{pmatrix} p'&p\\q'&q\end{pmatrix}
\begin{pmatrix} 0&1\\1&t\end{pmatrix}
=
\begin{pmatrix} p & p'+tp\\ q&q'+tq \end{pmatrix},
\\[1ex]
C(t,\bfa)
&=\mat{t}\mat{a_1}\dots\mat{a_n}
= \begin{pmatrix} 0&1\\1&t\end{pmatrix}
C(\bfa)
\\
&=
\begin{pmatrix} 0&1\\1&t\end{pmatrix}
\begin{pmatrix} p'&p\\q'&q\end{pmatrix}
=
\begin{pmatrix} q' & q \\ p'+tq'& p+tq \end{pmatrix},
\\[1ex]
C(t,\bfar)
&=\mat{t}\mat{a_n}\dots\mat{a_1}
= \begin{pmatrix} 0&1\\1&t\end{pmatrix} C(\bfar)
\\
&=
\begin{pmatrix} 0&1\\1&t\end{pmatrix}
\begin{pmatrix} p'&q'\\p&q\end{pmatrix}
=
\begin{pmatrix} p & q \\ p'+tp& q'+tq \end{pmatrix},
\end{align*}
as claimed.
\end{proof}

\begin{remark}
\label{rem:symmetry-property}
The identity in part (i) of the lemma relates the last two
convergents of a finite continued fraction corresponding to a string
$\bfa$ to the last two convergents of the continued fraction
corresponding to the reverse string $\bfar$. 
Identities of this type have long been known in the literature (see,
e.g., \cite[\S 11]{perron-book}, \cite[p.~189]{levy1929}, 
and \cite[Theorem 6]{khinchin-book}).
It is this identity that
lies at the root of the symmetry property \eqref{eq:symmetry}. By the
same token, the (apparent) lack of analogous identities for permutations
other than the reversal may explain why for
``most'' strings $\bfa$, the reversal $\bfar$ seems to be the only
permutation of $\bfa$ under which the Gauss--Kuzmin measure is invariant.
\end{remark}

\subsection{Fundamental Intervals}

Let $\bfa=(a_1,\dots,a_n)$ be a string of positive integers, and let 
$C(\bfa)=\left(\begin{smallmatrix} p'&p\\q'&q\end{smallmatrix}\right)$ 
be the associated convergent matrix (cf.  \eqref{eq:convergent-matrix-alt}).
We begin by obtaining an explicit formula for the set $I(\bfa)$ defined
in \eqref{eq:Ibfa-def} in terms of the entries $p',p,q',q$ 
of this matrix. 
By definition $I(\bfa)$ is the set of real
numbers $x\in[0,1)$ whose continued fraction expansion begins with the
digits of $\bfa$, 
i.e., the set of real numbers of the form $x=[a_1,\dots,a_n]$ or
$x=[a_1,\dots,a_n,*]$, 
with the asterisk denoting one or more positive integers. 
It follows easily from the definitions \eqref{eq:cf-def1} and
\eqref{eq:cf-infinite-def} of finite and infinite continued fractions 
that this set is an interval
with endpoints 
\begin{equation}
\notag
[a_1,\dots,a_n]=\frac{p_n}{q_n}=\frac{p}{q}
\end{equation}
and 
\begin{equation}
\notag
[a_1,\dots,a_n+1]=[a_1,\dots,a_n,1]=\frac{p'+p}{q'+q},
\end{equation}
where the first of these two endpoints is included, and the second
excluded.  We call this interval a \emph{fundamental interval}.  
Using the notation
\begin{equation}
\notag
\Fundint{\alpha,\beta}=
\begin{cases} 
[\alpha,\beta) & \text{if $\alpha< \beta$,}
\\
(\beta,\alpha] & \text{if $\alpha>\beta$,}
\end{cases}
\end{equation}
we thus have (cf. \cite[(3.6)]{neverending-fractions-book} or 
\cite[Ex.~1.3.15]{dajani-kraaikamp-book})
\begin{equation}
\label{eq:fundint-formula0}
I(\bfa)=
\Fundint{\frac{p}{q},\frac{p'+p}{q'+q}}.
\end{equation}
Since
\begin{equation}
\notag
\frac{p'+p}{q'+q}-\frac{p}{q} = \frac{(p'+p)q-p(q'+q)}{q(q'+q)}
=\frac{p'q-pq'}{q(q'+q)}=\frac{(-1)^n}{q(q'+q)},
\end{equation}
where the last step follows from \eqref{eq:determinant-formula},
\eqref{eq:fundint-formula0} can be written as 
\begin{equation}
\label{eq:fundint-formula1}
I(\bfa)=
\Fundint{\frac{p}{q},\frac{p}{q}+\frac{(-1)^n}{q(q'+q)}}.
\end{equation}

Next, we derive explicit formulas for the fundamental intervals
associated with strings of the form $\bfar$, $(\bfa,t)$, $(t,\bfa)$, and
$(t,\bfar)$.

\begin{lemma}
[Explicit formulas for fundamental intervals]
\label{lem:fundamental-intervals1}
Let $\bfa=(a_1,\dots,a_n)$
be a string of $n$ positive integers with convergent matrix
\eqref{eq:convergent-matrix-alt}. Then we have:
\begin{itemize}
\item[(i)]
$\displaystyle I(\bfar)=
\Fundint{\frac{q'}{q},\frac{q'}{q}+\frac{(-1)^n}{q(p+q)}}$;

\item[(ii)]
$\displaystyle I(\bfa,t)=
\Fundint{\frac{p'+tp}{q'+tq},
\frac{p'+tp}{q'+tq}+\frac{(-1)^{n+1}}{(q'+tq)(q'+(t+1)q)}}
\quad(t\in\NN)$;

\item[(iii)]
$\displaystyle I(t,\bfa)=
\Fundint{\frac{q}{p+tq},\frac{q}{p+tq}+\frac{(-1)^{n+1}}{(p+tq)(p'+p +
t(q'+q))}} \quad(t\in\NN)$;

\item[(iv)]
$\displaystyle I(t,\bfar)=
\Fundint{\frac{q}{q'+tq},\frac{q}{q'+tq}+
\frac{(-1)^{n+1}}{(q'+tq)(p'+q'+t(p+q))}}
\quad(t\in\NN)$.

\end{itemize}

\end{lemma}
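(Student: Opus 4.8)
The plan is to reduce all four formulas to the single identity \eqref{eq:fundint-formula1}, which already expresses the fundamental interval of a string entirely in terms of its convergent matrix, and then simply substitute the convergent matrices computed in Lemma \ref{lem:convergent-matrices}. First I would record the (essentially trivial but worth stating) observation that \eqref{eq:fundint-formula1}, although written there for the string $\bfa$, is really a statement about an \emph{arbitrary} finite string: if $\bfb$ is a string of length $m$ with convergent matrix $C(\bfb)=\left(\begin{smallmatrix} P'&P\\ Q'&Q\end{smallmatrix}\right)$, then $\det C(\bfb)=(-1)^m$ (by \eqref{eq:determinant-formula}, or equivalently because each factor $\mat{a_i}$ in \eqref{eq:key1} has determinant $-1$), and the computation leading to \eqref{eq:fundint-formula1} applies verbatim to give
\[
I(\bfb)=\Fundint{\frac{P}{Q},\ \frac{P}{Q}+\frac{(-1)^m}{Q(Q'+Q)}}.
\]

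With this in hand, each of (i)--(iv) becomes a direct substitution. For (i), the string $\bfar$ has length $n$ and, by Lemma \ref{lem:convergent-matrices}(i), convergent matrix $\left(\begin{smallmatrix} p'&q'\\ p&q\end{smallmatrix}\right)$; taking $(P',P,Q',Q)=(p',q',p,q)$ and $m=n$ in the displayed identity yields exactly the claimed formula. For (ii)--(iv) the relevant strings $(\bfa,t)$, $(t,\bfa)$, $(t,\bfar)$ all have length $n+1$, so the exponent of $-1$ becomes $n+1$; reading off the corresponding convergent matrices from parts (ii), (iii), (iv) of Lemma \ref{lem:convergent-matrices} and substituting, the only remaining task is to simplify the sum $Q'+Q$ in the denominator of the right endpoint. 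Concretely this amounts to the identities $q+(q'+tq)=q'+(t+1)q$ in case (ii), $(p'+tq')+(p+tq)=(p'+p)+t(q'+q)$ in case (iii), and $(p'+tp)+(q'+tq)=(p'+q')+t(p+q)$ in case (iv), which reproduce the stated denominators.

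There is no substantive obstacle here: once \eqref{eq:fundint-formula1} is recognized as valid for an arbitrary string, the lemma is pure bookkeeping. The only points requiring a little care are (a) keeping track of the length of each string so as to get the sign of the second endpoint right --- length $n$ for $\bfar$, hence $(-1)^n$, versus length $n+1$ for the other three, hence $(-1)^{n+1}=-(-1)^n$; and (b) correctly matching the entries $P',P,Q',Q$ to the positions they occupy in the matrices of Lemma \ref{lem:convergent-matrices}, since those matrices are displayed with $p,p',q,q'$ appearing in different slots in each case. One could alternatively re-derive (ii)--(iv) by imitating, for the strings $(\bfa,t)$, $(t,\bfa)$, $(t,\bfar)$, the direct argument that produced \eqref{eq:fundint-formula0} for $\bfa$, but routing everything through the convergent-matrix formula above is cleaner and makes the role of Lemma \ref{lem:convergent-matrices} transparent.
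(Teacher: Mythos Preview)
Your proposal is correct and follows essentially the same approach as the paper: both recognize that \eqref{eq:fundint-formula1} applies to an arbitrary string and then substitute the convergent matrices from Lemma~\ref{lem:convergent-matrices}, adjusting the parity of the length where needed. Your write-up is in fact slightly more explicit than the paper's in spelling out the $Q'+Q$ simplifications for parts (ii)--(iv).
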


\begin{proof}
By part (i) of Lemma \ref{lem:convergent-matrices}, the 
convergent matrix 
$C(\bfar)$
associated with the reverse string $\bfar$
is obtained from the convergent matrix 
$C(\bfa) =\left(\begin{smallmatrix} p'&p\\ q'&q\end{smallmatrix}\right)$
by interchanging $p$ and $q'$, i.e., by making the substitutions 
\begin{equation}
\notag
p\to q',\quad q'\to p.
\end{equation}
Performing the same substitutions in the formula 
\eqref{eq:fundint-formula1}
for the interval $I(\bfa)$ then yields
the formula for $I(\bfar)$ asserted in part (i) of the lemma.

The formulas for $I(\bfa,t)$, $I(t,\bfa)$, and $I(t,\bfar)$
in parts (ii)--(iv) of 
the lemma can be obtained similarly using the formulas for
$C(\bfa,t)$, $C(t,\bfa)$, and $C(t,\bfar)$  from Lemma 
\ref{lem:convergent-matrices} and making the substitutions 
\begin{align}
\notag
&p'\to p,\quad q'\to q,\quad p\to p'+tp,\quad q\to q'+tq,
\\
\notag
&p'\to q',\quad q'\to p'+tq',\quad p\to q,\quad q\to p+tq,
\\
\notag
&p'\to p,\quad q'\to p'+tp,\quad p\to q,\quad q\to q'+tq,
\end{align}
and also replacing $n$ by $n+1$ to account for the additional digit $t$ in
the strings $(\bfa,t)$, $(t,\bfa)$, and $(t,\bfar)$. 
\end{proof}

\subsection{The Gauss--Kuzmin Measure}

Using the formula \eqref{eq:fundint-formula1}
for $I(\bfa)$ 
we can compute 
the Gauss--Kuzmin probabilities 
(see \eqref{eq:mugk-def} and \eqref{eq:Pbfa-formula})
\begin{equation}
P_{GK}(\bfa)=\mu_{GK}(I(\bfa))=\frac1{\log 2}\int_{I(\bfa)}
\frac1{1+x}\, dx
\end{equation}
in terms of the entries $p,q,p',q'$ of the convergent matrix associated
with the string $\bfa$.

\begin{lemma}[Explicit formula for the Gauss--Kuzmin measure]
\label{lem:gk-formulas}
Let $\bfa=(a_1,\dots,a_n)$
be a string of positive integers, with convergent matrix
given by \eqref{eq:convergent-matrix-alt}.  
Then we have:
\begin{equation}
\label{eq:gk-formula}
P_{GK}(\bfa) = P_{GK}(\bfar) 
= \left|\log_2\left(1+\frac{(-1)^n}{(p+q)(q'+q)}\right)\right|.
\end{equation}
In particular, the Gauss--Kuzmin measure satisfies the symmetry property 
\eqref{eq:symmetry-alt}.
\end{lemma}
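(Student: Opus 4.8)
The plan is to compute $P_{GK}(\bfa)$ directly from the explicit description of the fundamental interval $I(\bfa)$ in \eqref{eq:fundint-formula1}, simplify the resulting logarithm, and then obtain $P_{GK}(\bfar)=P_{GK}(\bfa)$ either from the symmetry of the closed-form expression or by running the same computation with the formula for $I(\bfar)$ from Lemma~\ref{lem:fundamental-intervals1}(i).

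Concretely, I would set $\alpha=p/q$ and $\beta=p/q+(-1)^n/(q(q'+q))$, the two endpoints of $I(\bfa)$ given by \eqref{eq:fundint-formula1}, so that $I(\bfa)=\Fundint{\alpha,\beta}$. Since $I(\bfa)\subset[0,1)$, both $1+\alpha$ and $1+\beta$ are positive, and independently of which of $\alpha,\beta$ is the larger endpoint,
\[
\int_{I(\bfa)}\frac{dx}{1+x}=\left|\log(1+\beta)-\log(1+\alpha)\right|=\left|\log\frac{1+\beta}{1+\alpha}\right|,
\]
whence $P_{GK}(\bfa)=\bigl|\log_2\bigl((1+\beta)/(1+\alpha)\bigr)\bigr|$. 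It then remains to simplify the ratio. Using $1+\alpha=(p+q)/q$ together with
\[
1+\beta=1+\frac{p}{q}+\frac{(-1)^n}{q(q'+q)}=\frac{(p+q)(q'+q)+(-1)^n}{q(q'+q)},
\]
the factor $q$ cancels and one gets
\[
\frac{1+\beta}{1+\alpha}=\frac{(p+q)(q'+q)+(-1)^n}{(p+q)(q'+q)}=1+\frac{(-1)^n}{(p+q)(q'+q)},
\]
which on substitution yields the formula asserted in \eqref{eq:gk-formula}.

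For the equality $P_{GK}(\bfar)=P_{GK}(\bfa)$, the quickest route is to note that the right-hand side of \eqref{eq:gk-formula} is symmetric under interchanging $p$ and $q'$, while by Lemma~\ref{lem:convergent-matrices}(i) passing from $\bfa$ to $\bfar$ does exactly this (and preserves the length $n$, hence the sign $(-1)^n$). Alternatively one can simply rerun the computation above starting from $I(\bfar)=\Fundint{q'/q,\ q'/q+(-1)^n/(q(p+q))}$ (Lemma~\ref{lem:fundamental-intervals1}(i)): the same cancellation of $q$ gives $(1+\beta')/(1+\alpha')=1+(-1)^n/((p+q)(q'+q))$, the identical value. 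Either way \eqref{eq:gk-formula} holds for both strings with the same value, which is precisely the symmetry property \eqref{eq:symmetry-alt}.

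I do not expect any genuine obstacle here: the argument is one line of integration followed by elementary algebra. The only points that warrant a moment's care are that the logarithm is applied to the manifestly positive quantity $(1+\beta)/(1+\alpha)$, so that the absolute value in \eqref{eq:gk-formula} is both legitimate and exactly what absorbs the sign of $(-1)^n$ (equivalently, the a priori ambiguity about which endpoint of the fundamental interval is the left one), and that the substitution $p\leftrightarrow q'$ invoked in the $\bfar$ case leaves $n$, and hence $(-1)^n$, unchanged.
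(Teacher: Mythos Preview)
Your proposal is correct and follows essentially the same route as the paper: compute $P_{GK}(\bfa)$ from the endpoints in \eqref{eq:fundint-formula1}, simplify $(1+\beta)/(1+\alpha)$ to $1+(-1)^n/((p+q)(q'+q))$, and then deduce the $\bfar$ case from the invariance of $(p+q)(q'+q)$ under the swap $p\leftrightarrow q'$ coming from Lemma~\ref{lem:convergent-matrices}(i). The only cosmetic difference is that the paper phrases this invariance as ``product of the second-column sum and the second-row sum of $C(\bfa)$,'' which is the same observation.
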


\begin{proof}
By \eqref{eq:fundint-formula1}, $I(\bfa)$ is an
interval with endpoints
\begin{equation}
\label{eq:Ibfa-endpoints-formula}
\alpha=\frac{p}{q}, \quad 
\beta=\frac{p}{q}+\frac{(-1)^n}{q(q'+q)}.
\end{equation}
Hence,
\begin{align}
\notag
P_{GK}(\bfa)
&=\mu_{GK}(I(\bfa))=
\mu_{GK}(\fundint{\alpha,\beta})
\\
\label{eq:pgk-calculation1}
&=\frac1{\log 2}
\left|\int_{\alpha}^\beta \frac1{1+x}\,dx\right|
=\left|\log_2\frac{1+\beta}{1+\alpha}\right|.
\end{align}
Using \eqref{eq:Ibfa-endpoints-formula} we get
\begin{align}
\label{eq:pgk-calculation2}
\frac{1+\beta}{1+\alpha} 
&=\frac{1+\frac{p}{q}+\frac{(-1)^n}{q(q'+q)}}
{1+\frac{p}{q}}
=\frac{(p+q)(q'+q)+(-1)^n}{(p+q)(q'+q)}
=1+\frac{(-1)^n}{(p+q)(q'+q)}.
\end{align}
Substituting \eqref{eq:pgk-calculation2}
into \eqref{eq:pgk-calculation1} yields the desired formula for 
$P_{GK}(\bfa)$.

To obtain an analogous formula for $P_{GK}(\bfar)$,
note that the denominator in the formula
\eqref{eq:gk-formula}
for $P_{GK}(\bfa)$, i.e., the expression $(p+q)(q'+q)$, 
is the product of the sum of the entries in the second column of $C(\bfa)$
with the sum of the entries in the second row of $C(\bfa)$.
If $\bfa$ is replaced by $\bfar$, then by Lemma 
\ref{lem:convergent-matrices}, the entries in the second column of
$C(\bfar)$ become $(q',q)$ and thus have sum $q'+q$, while the entries
in the second row become $(p,q)$ and thus have sum $p+q$. The 
product of these two sums is $(q'+q)(p+q)$. The latter expression 
is identical to the corresponding expression, 
$(p+q)(q'+q)$, for the matrix $C(\bfa)$. Since the strings $\bfa$ and
$\bfar$ both have the same length $n$, it follows that
$P_{GK}(\bfar)=P_{GK}(\bfa)$.
\end{proof}


\section{Proof of Theorem \protect \ref{thm:gk-characterization}} 
\label{sec:thm1}

Throughout this section
we assume $\mu$ is a probability measure on $[0,1]$ with continuous
density function $f(x)$, so that
\begin{equation}
\label{eq:proof1.1-mu-def}
\mu(I)=\int_I f(x)dx
\end{equation}
for any interval $I\subseteq [0,1]$ and 
\begin{equation}
\notag
\mu([0,1])=\int_0^1f(x)dx=1.
\end{equation}

The crux of the proof lies in the following result.

\begin{lemma}
\label{lem:proof1.1-lemma1}
Let $\bfa=(a_1,\dots,a_n)$ be a string of positive integers,  and let
\begin{equation}
\notag
r=[a_1,\dots,a_n]
\end{equation}
be the rational number represented by the continued fraction $[\bfa]$.
Suppose that the measure
$\mu$ satisfies the symmetry property for all strings of the form
$(\bfa,t)=(a_1,\dots,a_n,t)$, $t\in\NN$; i.e., suppose that
\begin{equation}
\label{eq:proof1.1-symmetry1}
\mu(I(\bfa,t))=\mu(I(t,\bfar))\quad (t\in\NN).
\end{equation}
Then we have
\begin{equation}
\notag
f(r)=\frac{f(0)}{1+r}.
\end{equation}
\end{lemma}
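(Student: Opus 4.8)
The plan is to use the explicit formulas for the relevant fundamental intervals from Lemma \ref{lem:fundamental-intervals1}, translate the symmetry hypothesis \eqref{eq:proof1.1-symmetry1} into an asymptotic statement as $t\to\infty$, and then extract the value $f(r)$ by a limiting argument. Write $C(\bfa)=\left(\begin{smallmatrix}p'&p\\q'&q\end{smallmatrix}\right)$, so that $r=p/q$. By parts (ii) and (iv) of Lemma \ref{lem:fundamental-intervals1}, the intervals $I(\bfa,t)$ and $I(t,\bfar)$ have left endpoints $(p'+tp)/(q'+tq)$ and $q/(q'+tq)$, respectively, and lengths
\begin{equation}
\notag
|I(\bfa,t)| = \frac{1}{(q'+tq)(q'+(t+1)q)},\qquad
|I(t,\bfar)| = \frac{1}{(q'+tq)(p'+q'+t(p+q))}.
\end{equation}
As $t\to\infty$, the left endpoint of $I(\bfa,t)$ converges to $p/q=r$, while the left endpoint of $I(t,\bfar)$ converges to $0$; both intervals shrink to points, with $|I(\bfa,t)|\sim 1/(q^2t^2)$ and $|I(t,\bfar)|\sim 1/(q(p+q)t^2)$.

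The key step is then to evaluate $\mu$ on these shrinking intervals using continuity of $f$. Since $f$ is continuous at $r$, we have $\mu(I(\bfa,t)) = f(r)\,|I(\bfa,t)|\,(1+o(1))$ as $t\to\infty$, and similarly, since $f$ is continuous at $0$, $\mu(I(t,\bfar)) = f(0)\,|I(t,\bfar)|\,(1+o(1))$. (More carefully, one bounds $\mu(I)/|I|$ between the infimum and supremum of $f$ over $I$, which both tend to the value of $f$ at the limiting point.) Plugging these into the hypothesis $\mu(I(\bfa,t))=\mu(I(t,\bfar))$ and dividing gives
\begin{equation}
\notag
\frac{f(r)}{f(0)} = \lim_{t\to\infty}\frac{|I(t,\bfar)|}{|I(\bfa,t)|}
= \lim_{t\to\infty}\frac{(q'+(t+1)q)}{(p'+q'+t(p+q))}
= \frac{q}{p+q} = \frac{1}{1+p/q} = \frac{1}{1+r},
\end{equation}
which is exactly the claimed identity $f(r) = f(0)/(1+r)$.

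I expect the main (minor) obstacle to be the careful justification of the step $\mu(I) = f(\xi)\,|I|\,(1+o(1))$ for a sequence of intervals $I$ shrinking to a point $\xi$: this is just uniform continuity of $f$ on the compact interval $[0,1]$ together with the mean-value / squeeze bound $\bigl(\inf_I f\bigr)|I| \le \mu(I) \le \bigl(\sup_I f\bigr)|I|$, but it needs to be stated cleanly since $f(0)$ could a priori be $0$ (in which case the ratio argument needs slight rephrasing — e.g. one shows directly that $f(r)(p+q) = f(0)q$ by comparing $\mu(I(\bfa,t))\cdot t^2$ and $\mu(I(t,\bfar))\cdot t^2$ in the limit, avoiding division). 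A second small point worth checking is that the endpoints and orientation of the fundamental intervals (the $\langle\cdot,\cdot\rangle$ notation keeps track of which end is included) do not affect the length computation, so the $(-1)^{n+1}$ factors are harmless. Everything else is the routine algebra of comparing the two length formulas, which I would not belabor.
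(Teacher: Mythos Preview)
Your approach is essentially identical to the paper's: both use the explicit formulas from Lemma~\ref{lem:fundamental-intervals1} for $I(\bfa,t)$ and $I(t,\bfar)$, observe that these intervals shrink to $r$ and $0$ respectively as $t\to\infty$, and extract the ratio $f(r)/f(0)$ from the limiting ratio of their lengths. The only cosmetic difference is that the paper invokes the mean value theorem for integrals (writing $\mu(I(\bfa,t))=f(\xi(t))\,\delta(t)$ with $\xi(t)\to r$) rather than your $(1+o(1))$ formulation, which sidesteps the $f(0)=0$ issue you flagged without needing a separate case.
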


\begin{proof}
By Lemma \ref{lem:fundamental-intervals1} we have, for any
$t\in\NN$,
\begin{align}
\notag
I(\bfa,t) &= \Fundint{\alpha(t),\alpha(t)+(-1)^{n+1}\delta(t)},
\\
\notag
I(t,\bfar)&=\Fundint{\beta(t),\beta(t)+(-1)^{n+1}\epsilon(t)},
\end{align}
where 
\begin{align}
\label{eq:proof1.1-alpha-def}
\alpha(t)&=\frac{p'+tp}{q'+tq},
\quad \delta(t)=\frac1{(q'+tq)(q'+(t+1)q)},
\\
\label{eq:proof1.1-beta-def}
\beta(t)&=\frac{q}{q'+tq},
\quad \epsilon(t)=\frac1{(q'+tq)(p'+q'+t(p+q))}.
\end{align}
(Recall that $p'/q'$ and $p/q$ denote the last two convergents of the
continued fraction $[a_1,\dots,a_n]$, so that 
$[a_1,\dots,a_{n-1}]=p'/q'$ and $[a_1,\dots,a_n]=p/q=r$, 
with the convention that, when $n=1$,
$(p',q')=(p_0,q_0)=(0,1)$.)

Using \eqref{eq:proof1.1-mu-def} and the mean value theorem for
integrals, it follows
that
\begin{align}
\label{eq:proof1.1-muIbfat}
\mu(I(\bfa,t))&=\left|\int_{\alpha(t)}^{\alpha(t)+(-1)^{n+1}\delta(t)}
f(x)dx\right| =f(\xi(t))\delta(t),
\\
\label{eq:proof1.1-muItbfar}
\mu(I(t,\bfar))&=\left|\int_{\beta(t)}^{\beta(t)+(-1)^{n+1}\epsilon(t)}
f(x)dx\right| =f(\eta(t))\epsilon(t),
\end{align}
where $\xi(t)$ and $\eta(t)$ are real numbers in $[0,1]$ satisfying
\begin{align}
\label{eq:proof1.1-xi}
|\xi(t)-\alpha(t)|&\le \delta(t),
\\
\label{eq:proof1.1-eta}
|\eta(t)-\beta(t)|&\le \epsilon(t).
\end{align}

Now note that, as $t\to\infty$, we have, by 
\eqref{eq:proof1.1-alpha-def}
and \eqref{eq:proof1.1-beta-def}, 
\begin{align*}
\alpha(t)&= \frac{p+p'/t}{q+q'/t}\to \frac{p}{q}=r,
\quad \delta(t)\to 0,
\\
\beta(t)&=\frac{p'}{p+tp'}\to 0,
\quad \epsilon(t)\to 0.
\end{align*}
Using 
\eqref{eq:proof1.1-xi} and \eqref{eq:proof1.1-eta}, it follows that 
\begin{align*}
\lim_{t\to\infty}\xi(t)&=\lim_{t\to\infty}\alpha(t)=r,
\\
\lim_{t\to\infty}\eta(t)&=\lim_{t\to\infty}\beta(t)=0,
\end{align*}
and hence, by the continuity of $f(x)$,
\begin{align}
\label{eq:proof1.1-fxi-limit}
\lim_{t\to\infty}f(\xi(t))&=f(r),
\\
\label{eq:proof1.1-feta-limit}
\lim_{t\to\infty}f(\eta(t))&=f(0).
\end{align}

On the other hand, the symmetry assumption \eqref{eq:proof1.1-symmetry1}
along with the formulas \eqref{eq:proof1.1-muIbfat}
and \eqref{eq:proof1.1-muItbfar} imply that 
\begin{equation*}
f(\xi(t)) \delta(t)= f(\eta(t))\epsilon(t),
\end{equation*}
and hence 
\begin{equation*}
f(\xi(t))=f(\eta(t))\frac{\epsilon(t)}{\delta(t)}.
\end{equation*}
Letting $t\to\infty$ on both sides of the latter identity, we obtain, in
view of \eqref{eq:proof1.1-fxi-limit} and
\eqref{eq:proof1.1-feta-limit}, 
\begin{align*}
f(r)&=f(0)\lim_{t\to\infty}\frac{\epsilon(t)}{\delta(t)}
\\
&=f(0)\lim_{t\to\infty}
\frac{(q'+tq)(q'+(t+1)q)}
{(q'+tq)(p'+q'+t(p+q))}
=\frac{f(0)}{1+p/q}=\frac{f(0)}{1+r},
\end{align*}
as claimed.
\end{proof}

\begin{proof}[Proof of Theorem \ref{thm:gk-characterization}]
Assume now that $\mu$
satisfies the full strength of the hypothesis of Theorem 
\ref{thm:gk-characterization}, i.e., that the symmetry property 
$\mu(I(\bfa))=\mu(I(\bfar))$ holds
for all finite strings $\bfa$ of positive integers whose length belongs to a
given infinite set $S$.  
By Lemma \ref{lem:proof1.1-lemma1} we then have
\begin{equation}
\notag
f(r)=\frac{f(0)}{1+r} 
\end{equation}
for all rational numbers $r$ of the form
\begin{equation}
\label{eq:proof1.1-r-formula}
r=[a_1,\dots,a_n],\quad n+1\in S,\quad a_i\in\NN \quad (i=1,\dots,n).
\end{equation}
To complete the proof, it
remains to show that the set of numbers $r$ of the form 
\eqref{eq:proof1.1-r-formula}
is
dense in the interval $[0,1]$. Indeed, if this set is dense in $[0,1)$,
then the continuity of $f$ implies that the relation
$f(x)=f(0)/(1+x)$ holds for \emph{all} real $x\in(0,1)$, while the assumption 
that $f$ is a probability density function forces 
$f(0)=1/\log 2$. Hence, $f$ is the density function of the
Gauss--Kuzmin measure, and we conclude $\mu=\mu_{GK}$, as claimed.

To prove the above claim, it is enough to show that every
\emph{irrational} number in $(0,1)$ can be approximated arbitrarily
closely by numbers of the form \eqref{eq:proof1.1-r-formula}.  
Fix an irrational number $x\in(0,1)$, let 
\begin{equation}
\notag
x=[a_1,a_2,\dots]
\end{equation}
be the (infinite) continued fraction expansion of $x$, and let
\begin{equation}
\notag
r_n=[a_1,\dots,a_n]
\end{equation}
be the $n$th convergent of this continued fraction. Note that $r_n$ is
of the form \eqref{eq:proof1.1-r-formula} whenever $n+1\in S$. Since 
$\lim_{n\to\infty}r_n=x$ and the set $S$ is infinite, it follows that 
$x$ can be approximated arbitrarily closely by numbers $r_n$ with
$n+1\in S$, and hence by numbers of the form
 \eqref{eq:proof1.1-r-formula}.  
This completes the proof of Theorem \ref{thm:gk-characterization}.
\end{proof}


\begin{remark}
\label{rem:thm1refinements}
The above proof shows that to obtain the conclusion of the
theorem, it suffices to impose the symmetry property
\eqref{eq:symmetry-general} on strings of the form
$\bfa=(\bfa',t)$, $t\in\NN$, 
where $\bfa'$ runs through a set of finite strings of positive integers
with the property that the rational numbers $[\bfa']$ represented by
these strings are dense in $[0,1]$.  
In fact, using set theoretic arguments one can show the following
refinement of the theorem: Given any infinite set $S$ of positive
integers, there exists a sequence of ``test strings'' $\bfa_s$, 
$s\in S$, where $\bfa_s$ has length
$s$, such that 
the conclusion of the theorem remains valid 
if the symmetry property holds for the strings $\bfa_s$, $s\in S$.
\end{remark}


\section{Proof of Theorem 
\protect \ref{thm:gk-characterization-optimality}}

\label{sec:thm2proof}

Given $N\in\NN$, we seek to construct a probability measure $\mu$ on
$[0,1]$ with continuous density function that satisfies the symmetry
property
\begin{equation}
\label{eq:proof1.2-symmetry-general}
\mu(I(\bfa))=\mu(I(\bfar))
\end{equation}
for all strings $\bfa$ of length at most
$N$, but is different from the Gauss--Kuzmin measure $\mu_{GK}$.

The key to our argument is contained in the following lemma.

\begin{lemma}
\label{lem:proof1.2-lem}
Let $N\in \NN$ be given and let $\mu$ be a probability measure
on $[0,1]$ satisfying 
\begin{equation}
\label{eq:proof1.2-lem1}
\mu(I(\bfa))=\mu_{GK}(I(\bfa))
\end{equation}
for all strings $\bfa=(a_1,\dots,a_N)$ of positive integers of length
exactly $N$. Then $\mu$ satisfies the symmetry property
\eqref{eq:proof1.2-symmetry-general} for all strings
$\bfa=(a_1,\dots,a_n)$ of length $n\le N$.
\end{lemma}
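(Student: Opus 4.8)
The plan is to promote the hypothesis \eqref{eq:proof1.2-lem1}, which controls $\mu$ only on fundamental intervals of length exactly $N$, to the assertion that $\mu(I(\bfa)) = \mu_{GK}(I(\bfa))$ for \emph{every} string $\bfa$ of length $n \le N$. Once this is available, the symmetry property \eqref{eq:proof1.2-symmetry-general} follows with no further work: for a string $\bfa$ of length $n \le N$ the reverse $\bfar$ again has length $n \le N$, so
\[
\mu(I(\bfa)) = \mu_{GK}(I(\bfa)) = \mu_{GK}(I(\bfar)) = \mu(I(\bfar)),
\]
the middle equality being the symmetry property of the Gauss--Kuzmin measure established in Lemma \ref{lem:gk-formulas}. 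Thus the entire content of the lemma lies in the promotion step.

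For the promotion step I would partition $I(\bfa)$ according to the continued fraction digits in positions $n+1,\dots,N$. By the definition \eqref{eq:Ibfa-def}, a number $x \in I(\bfa)$ that has at least $N$ continued fraction digits lies in the level-$N$ fundamental interval $I(a_1(x),\dots,a_N(x))$, whose defining string extends $\bfa$; a number $x \in I(\bfa)$ with fewer than $N$ digits is rational. Since a number determines its digits, distinct level-$N$ fundamental intervals are disjoint, and we get
\[
I(\bfa) = \Bigl(\bigsqcup_{\bfb} I(\bfb)\Bigr) \sqcup E(\bfa),
\]
where $\bfb$ runs over the $N$-digit strings extending $\bfa$ and $E(\bfa) := I(\bfa) \setminus \bigcup_{\bfb} I(\bfb)$ is, by the previous sentence, a countable set of rationals. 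Because $\mu_{GK}$ has a density it is atomless, so $\mu_{GK}(E(\bfa)) = 0$ and countable additivity gives $\mu_{GK}(I(\bfa)) = \sum_{\bfb} \mu_{GK}(I(\bfb))$.

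The one genuine obstacle is that $\mu$ is assumed only to be a probability measure, so a priori it might carry mass on the ($\mu_{GK}$-null) countable set $E(\bfa)$. I would remove this by a global mass count. The union $\bigcup_{|\bfb| = N} I(\bfb)$ omits only countably many points of $[0,1]$ --- every irrational in $[0,1)$ has infinitely many continued fraction digits, hence lies in the level-$N$ interval determined by its first $N$ of them --- so it is $\mu_{GK}$-conull and, using \eqref{eq:proof1.2-lem1} together with countable additivity,
\[
\mu\Bigl(\bigcup_{|\bfb| = N} I(\bfb)\Bigr) = \sum_{|\bfb| = N}\mu(I(\bfb)) = \sum_{|\bfb| = N}\mu_{GK}(I(\bfb)) = \mu_{GK}\Bigl(\bigcup_{|\bfb| = N} I(\bfb)\Bigr) = 1 = \mu([0,1]).
\]
Hence $\mu$ vanishes on the complement of $\bigcup_{|\bfb| = N} I(\bfb)$; and since $E(\bfa)$ consists of numbers with fewer than $N$ continued fraction digits it is disjoint from every $I(\bfb)$ with $|\bfb| = N$, hence contained in that complement, so $\mu(E(\bfa)) = 0$. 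Substituting this into the partition above and applying \eqref{eq:proof1.2-lem1} once more yields $\mu(I(\bfa)) = \sum_{\bfb}\mu(I(\bfb)) = \sum_{\bfb}\mu_{GK}(I(\bfb)) = \mu_{GK}(I(\bfa))$ for every $\bfa$ of length $n \le N$, completing the promotion step and hence the proof of the lemma.
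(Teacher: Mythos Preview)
Your proof is correct and follows essentially the same route as the paper: partition $I(\bfa)$ into level-$N$ fundamental intervals, apply the hypothesis termwise to obtain $\mu(I(\bfa))=\mu_{GK}(I(\bfa))$ for all $n\le N$, and then invoke the symmetry property of $\mu_{GK}$. Your argument is in fact slightly more careful than the paper's, which dismisses the exceptional rationals as ``a set of measure zero'' without justifying this for $\mu$ (the lemma assumes no density); your global mass-count showing $\sum_{|\bfb|=N}\mu(I(\bfb))=1$ closes this gap cleanly.
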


\begin{proof}
First note that, since the Gauss--Kuzmin measure $\mu_{GK}$ satisfies the
symmetry property \eqref{eq:proof1.2-symmetry-general},
the assumptions of the lemma imply that 
for strings $\bfa$ of length exactly $N$, 
\begin{equation}
\notag
\mu(I(\bfa)) =\mu_{GK}(I(\bfa)) =\mu_{GK}(I(\bfar))= \mu(I(\bfar)).
\end{equation}
Thus, $\mu$ satisfies the symmetry property
\eqref{eq:proof1.2-symmetry-general} for strings of length $N$.

It therefore remains to show that \eqref{eq:proof1.2-symmetry-general}
also holds for strings  $\bfa$ of length $n<N$. This will follow if we
can show that the assumption \eqref{eq:proof1.2-lem1} remains valid for
such strings.

To see this, let $\bfa=(a_1,\dots,a_n)$ be a string of positive integers
of length $n<N$. From the definition of $I(\bfa)$ as the set of real
numbers in $(0,1)$ whose continued fraction expansion begins with the
digits $a_1,\dots,a_n$ it is clear that, modulo a set of measure
zero\footnote{The exceptional set consists of rational numbers whose
continued fraction expansion begins with the string $\bfa$, but has
fewer than $N$ digits and thus is not counted in any set 
$I(\bfa')$, where $\bfa'\in\NN^N$.}, 
$I(\bfa)$ is the disjoint 
union of sets 
$I(\bfa')$, where $\bfa'$ runs through strings of the form
$\bfa'=(\bfa,\bfb)=(a_1,\dots,a_n,b_1,\dots,b_{N-n})$, with $b_i\in\NN$.
Since the strings $\bfa'$ have length exactly $N$, 
the assumption of the lemma applies to these strings, so we have 
\begin{align*}
\mu(I(\bfa))
&= \sum_{\bfb\in\NN^{N-n}} \mu(I(\bfa,\bfb))
= \sum_{\bfb\in\NN^{N-n}} \mu_{GK}(I(\bfa,\bfb))=\mu_{GK}(\bfa), 
\end{align*}
as claimed. 
\end{proof}

\begin{proof}[Proof of Theorem 
\ref{thm:gk-characterization-optimality}]
Let $N\in\NN$ be given.  In view of the lemma, it suffices to construct
a probability measure $\mu$ on $[0,1]$ with continuous density function
that takes on the same value as the Gauss--Kuzmin measure $\mu_{GK}$ on
sets of the form $I(\bfa)$, $\bfa\in\NN^N$, 
but is different from $\mu_{GK}$. 

Choose a
particular string $\bfa_0=(a_{1,0},\dots,a_{N,0})$ of $N$ positive integers,
let $\alpha<\beta$ be the endpoints of the interval
$I(\bfa_0)$, and let $\mu$ be the measure on $[0,1]$ with density function  
given by
\begin{equation}
\label{eq:proof1.2-f-def}
f(x)=\begin{cases}
f_0(x)&\text{if 
$\alpha<x<\beta$,}
\\
f_{GK}(x) &\text{otherwise,}
\end{cases}
\end{equation}
where 
\begin{equation}
\notag
f_{GK}(x)=\frac{1}{(\log 2)(1+x)}\quad (0\le x\le 1)
\end{equation}
is the density function of the Gauss--Kuzmin measure $\mu_{GK}$ and 
$f_0(x)$ is any nonnegative continuous function on
$[\alpha,\beta]$ satisfying 
\begin{align}
\label{eq:proof1.2-f0-property1}
f_0(\alpha)&=f_{GK}(\alpha),\quad f_0(\beta)=f_{GK}(\beta),
\\
\label{eq:proof1.2-f0-property2}
f_0(x)&\not=f_{GK}(x)\quad \text{for some $x\in(\alpha,\beta)$,}
\\
\label{eq:proof1.2-f0-property3}
\int_{\alpha}^{\beta}f_0(x)dx&=\int_{\alpha}^{\beta}f_{GK}(x)dx.
\end{align}
The definition \eqref{eq:proof1.2-f-def} of $f(x)$ implies that  
$f(x)$ is identical to $f_{GK}(x)$
outside the interval $I(\bfa_0)$. Since the intervals $I(\bfa)$,
$\bfa\in\NN^N$, are pairwise disjoint, it follows that
$\mu(I(\bfa))=\mu_{GK}(I(\bfa))$ holds for all strings $\bfa\in\NN^N$
with $\bfa\not=\bfa_0$, while condition \eqref{eq:proof1.2-f0-property3}
ensures that $\mu(I(\bfa))=\mu_{GK}(I(\bfa))$ also holds for $\bfa=\bfa_0$.
On the other hand, by  \eqref{eq:proof1.2-f0-property2} and the
continuity of $f_0$, the measure $\mu$ is different from the
Gauss--Kuzmin measure $\mu_{GK}$. 

The conditions \eqref{eq:proof1.2-f0-property1} and the
assumption that $f_0$ is continuous on $(\alpha,\beta)$ ensure that the
function $f(x)$ defined by \eqref{eq:proof1.2-f-def} is continuous on
the entire interval $[0,1]$.  Moreover, \eqref{eq:proof1.2-f0-property3}
implies that $\int_0^1f(x)dx=\int_0^1f_{GK}(x)dx=1$, so that $f(x)$ is a
continuous probability density function on $[0,1]$.  Thus, 
by Lemma \ref{lem:proof1.2-lem},
the measure
$\mu$ has all of the required properties, and the proof is complete.
\end{proof}


\section{Proof of Theorem 
\protect
\ref{thm:nontrivial-symmetries}}

\label{sec:thm3proof}

\subsection{Characteristic Numbers}

By Lemma \ref{lem:gk-formulas} the Gauss--Kuzmin measure $P_{GK}(\bfa)$
of a  string $\bfa=(a_1,\dots,a_n)$ of positive integers 
is given by
\begin{equation}
\label{eq:gk-formula1}
P_{GK}(\bfa)
= \left|\log_2\left(1+\frac{(-1)^n}{(p+q)(q'+q)}\right)\right|.
\end{equation}
where $p',q',p,q$ are the entries of the convergent
matrix 
$C(\bfa)=\left(\begin{smallmatrix} p'&p\\q'&q\end{smallmatrix}\right)$.
In particular, $P_{GK}(\bfa)$ depends only on the parity of $n$ and
the quantity $(p+q)(q'+q)$ appearing in the denominator on the right 
side of \eqref{eq:gk-formula1}.   In view of the key role played by this
quantity we introduce the following definition.

\begin{definition}[Characteristic number]
\label{def:char-number}
Let $\bfa=(a_1,\dots,a_n)$ be a finite string of positive integers with 
convergent matrix 
$C(\bfa)=\left(\begin{smallmatrix} p'&p\\q'&q\end{smallmatrix}\right)$.
The \emph{characteristic number}, $\chi(\bfa)$, 
of the string $\bfa$ is defined as  
\begin{equation}
\label{eq:char-number-def}
\chi(\bfa)=(p+q)(q'+q).
\end{equation}
Thus, $\chi(\bfa)$ is the product of the sum of the entries in
the second column with the sum of the entries of the second row of the
convergent matrix, $C(\bfa)$, associated with this string.
\end{definition}

In light of the above remarks we then have the following result.

\begin{lemma}
\label{lem:char-number}
If $\bfa$ and $\bfb$ are strings of positive integers whose lengths have
the same parity, then
$P_{GK}(\bfa)=P_{GK}(\bfb)$ holds if and only if
$\chi(\bfa)=\chi(\bfb)$.
\end{lemma}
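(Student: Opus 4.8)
The plan is to deduce the statement directly from the explicit formula \eqref{eq:gk-formula1} for $P_{GK}$. Write $n$ and $m$ for the lengths of $\bfa$ and $\bfb$. By Lemma \ref{lem:gk-formulas} we have $P_{GK}(\bfa)=\left|\log_2\!\left(1+(-1)^n/\chi(\bfa)\right)\right|$ and $P_{GK}(\bfb)=\left|\log_2\!\left(1+(-1)^m/\chi(\bfb)\right)\right|$. Since $n$ and $m$ have the same parity, $(-1)^n=(-1)^m=:\varepsilon\in\{+1,-1\}$, so both quantities have the form $g_\varepsilon(\chi)$, where $g_\varepsilon(x)=\left|\log_2\!\left(1+\varepsilon/x\right)\right|$. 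The ``if'' direction is then immediate: $\chi(\bfa)=\chi(\bfb)$ gives $P_{GK}(\bfa)=g_\varepsilon(\chi(\bfa))=g_\varepsilon(\chi(\bfb))=P_{GK}(\bfb)$.

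For the ``only if'' direction I would prove that $g_\varepsilon$ is strictly decreasing, and hence injective, on the interval $[2,\infty)$. When $\varepsilon=+1$ this is clear, since $g_{+1}(x)=\log_2(1+1/x)$ is positive and strictly decreasing. When $\varepsilon=-1$ I would rewrite $g_{-1}(x)=-\log_2(1-1/x)=\log_2\!\left(1+\tfrac1{x-1}\right)$, which is positive and strictly decreasing for $x>1$, hence injective on $[2,\infty)$. It then remains to note that characteristic numbers always lie in this range, i.e.\ that $\chi(\bfa)\ge 2$ for every string $\bfa$ of length at least $1$; this follows from $\chi(\bfa)=(p+q)(q'+q)$ together with $q=q_n\ge 1$, $q'=q_{n-1}\ge 1$ (so $q'+q\ge 2$) and $p=p_n\ge 0$ (so $p+q\ge 1$). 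Since $\chi(\bfa),\chi(\bfb)\in[2,\infty)$, injectivity of $g_\varepsilon$ there yields $\chi(\bfa)=\chi(\bfb)$ from the hypothesis $P_{GK}(\bfa)=P_{GK}(\bfb)$, completing the proof.

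There is essentially no obstacle in this argument: once the parity is fixed, $P_{GK}$ is an explicit function of the single quantity $\chi$, and the content is just the monotonicity of that function. The only point requiring a moment's care is the absolute value appearing in \eqref{eq:gk-formula1} in the case $\varepsilon=-1$, which the rewriting $\left|\log_2(1-1/x)\right|=\log_2(1+1/(x-1))$ both disposes of and shows to be well defined on the relevant domain $[2,\infty)$ (where $1-1/x>0$).
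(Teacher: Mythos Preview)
Your argument is correct and follows exactly the approach the paper has in mind: the paper simply asserts the lemma as an immediate consequence of the explicit formula \eqref{eq:gk-formula1}, and you have spelled out the only nontrivial point, namely the injectivity of $x\mapsto\left|\log_2(1+\varepsilon/x)\right|$ on the range of $\chi$. Your check that $\chi(\bfa)\ge 2$ (via $q'+q\ge 2$ and $p+q\ge 1$) is sound and is precisely what is needed to handle the absolute value in the case $\varepsilon=-1$.
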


In particular, \emph{two permutations of a string have the same
Gauss--Kuzmin measure if and only if they have the same characteristic
number.} The proofs of both parts of 
Theorem \ref{thm:nontrivial-symmetries} as well as
the experimental results presented in 
Section \ref{sec:numerical-data} are based on this crucial observation.


\subsection{Proof of Theorem \ref{thm:nontrivial-symmetries}(i)}

Given a string $(a,b,c)$ of positive integers, let $\chi(a,b,c)$ be
the characteristic number of this string. We seek to show that if  
$(a',b',c')$ is a permutation of $(a,b,c)$ such that 
$(a',b',c')\not=(a,b,c)$ and $(a',b',c')\not=(c,b,a)$,
then $\chi(a',b',c')\not=\chi(a,b,c)$.  We first consider the particular
permutation $(a',b',c')=(b,a,c)$, i.e., the permutation that
interchanges $a$ with $b$.

\begin{lemma}
\label{lem:proof1.3-lem1}
Let $(a,b,c)$ be a string of positive integers.
If $a\not=b$, then 
\begin{equation}
\notag
\chi(a,b,c)\not=\chi(b,a,c).
\end{equation}
\end{lemma}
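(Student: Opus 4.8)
The plan is to compute the characteristic numbers $\chi(a,b,c)$ and $\chi(b,a,c)$ explicitly in terms of $a,b,c$ using the convergent matrix formula \eqref{eq:key1}, and then show their difference factors in a way that is nonzero whenever $a\ne b$. First I would expand $C(a,b,c)=\mat{a}\mat{b}\mat{c}$ to get the entries $p',p,q',q$ as polynomials in $a,b,c$; for a length-$3$ string these are small: one finds $q'=b$, $q=bc+1$, $p'=1$, $p=c$ (reading off the continued fraction $[a,b,c]$), and then $C(a,b,c)$ has the form dictated by \eqref{eq:key1}. Concretely I would record $p+q$ and $q'+q$ for the string $(a,b,c)$ and substitute $a\leftrightarrow b$ to get the corresponding quantities for $(b,a,c)$, then form $\chi(a,b,c)=(p+q)(q'+q)$ in each case.

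Next I would compute the difference $\chi(a,b,c)-\chi(b,a,c)$ directly. The expectation is that after expansion nearly everything cancels and what remains is $(a-b)$ times a manifestly positive expression in $a,b,c$ — most likely of the form $(a-b)\cdot g(a,b,c)$ where $g$ is a polynomial with all positive coefficients (plausibly something like $(a-b)(c+1)(c+\text{something positive})$ or a slightly more involved positive factor). Since $a,b,c\ge 1$, this second factor is strictly positive, so the difference vanishes only when $a=b$, which is exactly the claim. This is essentially a one-line algebra verification once the matrix entries are in hand.

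The main (and really only) obstacle is bookkeeping: making sure the convergent-matrix entries are assigned correctly — in particular keeping straight which of $p',p,q',q$ is which, since $\chi$ is asymmetric in rows versus columns — and then carrying out the polynomial subtraction without sign errors. There is no conceptual difficulty: the result is a finite computation in three positive integer variables. I would double-check the final factorization by evaluating at a small numerical case (e.g. $(a,b,c)=(3,1,4)$ versus $(1,3,4)$, whose characteristic numbers can be read off Table \ref{tab:table1} via $P_{GK}=-\log_2(1-1/\chi)$, giving $\chi=551$ and $\chi=629$ respectively) to confirm the sign and the identity of the positive cofactor.
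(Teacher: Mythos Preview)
Your overall plan---compute $\chi(a,b,c)$ from the convergent matrix, swap $a\leftrightarrow b$, subtract, and pull out a factor of $(a-b)$---is exactly the route the paper takes. Two things go wrong in your proposal, one minor and one substantive.

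The minor issue is bookkeeping: the entries you list ($p'=1$, $p=c$, $q'=b$, $q=bc+1$) are the convergent data of $[b,c]$, not of $[a,b,c]$. For the length-$3$ string one actually has $p'=b$, $p=bc+1$, $q'=ab+1$, $q=abc+a+c$, whence
\[
\chi(a,b,c)=(abc+bc+a+c+1)(abc+ab+a+c+1).
\]
You flagged this kind of error yourself, so presumably you would catch it on a second pass.

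The substantive gap is your expectation that the cofactor of $(a-b)$ is ``manifestly positive'' with all positive coefficients. It is not. Writing $S=abc+a+b+c+1$ (symmetric in $a,b,c$), the paper finds
\[
\chi(a,b,c)-\chi(b,a,c)=(a-b)\bigl[(2-c)S+(c-1)(a+b-ab)\bigr],
\]
and the bracketed cofactor changes sign: for instance it equals $+7$ at $(a,b,c)=(2,1,1)$ but $-11$ at $(2,1,3)$ and $-39$ at $(3,1,4)$. (Your own numerical check from Table~\ref{tab:table1}, with $a-b=2>0$ and $\chi(3,1,4)-\chi(1,3,4)=551-629<0$, already shows the cofactor is negative there.) So positivity cannot finish the argument. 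The paper instead shows the cofactor is \emph{nonzero} via a short case split: for $c=1$ it equals $S>0$; for $c=2$ it equals $1-(a-1)(b-1)$, which vanishes only when $a=b=2$; and for $c\ge3$ one bounds it away from zero using $S\ge5$. That case analysis is the missing ingredient in your plan.
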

\begin{proof}
The proof is based on an explicit calculation of the characteristic
number of a string $(a,b,c)$ of positive integers as a polynomial 
in the variables $a,b,c$.

Fix a string $(a,b,c)$ of positive integers with $a\not=b$, and suppose,
to get a contradiction, that 
\begin{equation}
\label{eq:proof1.3-assumption-chi}
\chi(b,a,c)= \chi(a,b,c).
\end{equation}
Using \eqref{eq:key1}, we can calculate 
the convergent matrix of the string $(a,b,c)$ as 
\begin{equation*}
C(a,b,c)=\mat{a}\mat{b}\mat{c}
=\begin{pmatrix} b & bc+1\\ ab+1 & abc+a+c\end{pmatrix},
\end{equation*}
so we have $p'=b$, $q'=ab+1$, $p=bc+1$, and $q=abc+a+c$. Substituting these
values into \eqref{eq:char-number-def}, we obtain
\begin{align}
\label{eq:char-number-formula0}
\chi(a,b,c)&=(p+q)(q'+q)
=(abc+bc+a+c+1)(abc+ab+a+c+1).
\end{align}
Introducing the polynomial 
\begin{equation}
\label{eq:proof1.3-S-def}
S=S(a,b,c)=abc+a+b+c+1,
\end{equation}
we can rewrite \eqref{eq:char-number-formula0} as 
\begin{equation}
\label{eq:char-number-formula1}
\chi(a,b,c)=(S+bc-b)(S+ab-b)=S^2 +S(ab+bc-2b)+b^2(a-1)(c-1).
\end{equation}

Now note that, since $S$ is a symmetric polynomial in $a,b,c$,
permuting these variables does not affect the value of $S$. 
Hence, interchanging $a$ and $b$ in \eqref{eq:char-number-formula1}
and subtracting the resulting expression from the expression
on the right of \eqref{eq:char-number-formula1} yields, 
in view of our assumption \eqref{eq:proof1.3-assumption-chi},
\begin{align}
\notag
0&=\chi(a,b,c)-\chi(b,a,c)
\\
\notag
&=(bc-2b-ac+2a)S+(b^2(a-1)-a^2(b-1))(c-1)
\\
\notag
&=(b-a)
\left((c-2)S+(ab-a-b)(c-1)\right),
\end{align}
and hence, since $a\not=b$, 
\begin{equation}
\label{eq:char-number-formula3}
(c-2)S=(c-1)(a+b-ab)=(c-1)\bigl[1-(a-1)(b-1)\bigr].
\end{equation}
We show that \eqref{eq:char-number-formula3} cannot hold. 
If $c=1$, the right-hand side of \eqref{eq:char-number-formula3}
vanishes, while the left-hand side is negative, so we have a
contradiction. 
If $c=2$, the left-hand side vanishes, while the right-hand side is
non-zero since $(a-1)(b-1)\not=1$ by our assumption $a\not=b$, so this
case is also impossible.
Finally, if $c\ge 3$, then \eqref{eq:char-number-formula3} implies
\begin{equation*}
S=\frac{c-1}{c-2}\bigl[1-(a-1)(b-1)\bigr]\le \frac{c-1}{c-2}\le 2,
\end{equation*}
which is again a contradiction since, by  
\eqref{eq:proof1.3-S-def}, $S=abc+a+b+c+1\ge 5$.
\end{proof}

\begin{proof}[Proof of Theorem \ref{thm:nontrivial-symmetries}(i)]
Let $(a,b,c)$ be a string of positive integers, and let 
$(a',b',c')$ be a permutation of $(a,b,c)$. Suppose that 
\begin{equation}
\label{eq:proof1.3-assumption}
\chi(a',b',c') = \chi(a,b,c).
\end{equation}
We seek to show that \eqref{eq:proof1.3-assumption}
can only hold if the permutation $(a',b',c')$
is either the string $(a,b,c)$ itself, or its reverse, $(c,b,a)$.

If $b'=b$, the desired conclusion obviously holds. Assume therefore that 
$b'\not=b$. Then $b'=a$ or $b'=c$, and by the symmetry property we may 
assume without loss of generality that $b'=a$. Thus, we have 
either $(a',b',c')=(b,a,c)$ or $(a',b',c')=(c,a,b)$. Since, by the
symmetry property, $\chi(b,a,c)=\chi(c,a,b)$, it suffices to consider
the case $(a',b',c')=(b,a,c)$. But in this case 
Lemma \ref{lem:proof1.3-lem1} along with 
our assumption \eqref{eq:proof1.3-assumption} implies that  $a=b$. 
Hence, $(a',b',c')=(a,a,c)=(a,b,c)$, i.e., the permutation $(a',b',c')$ 
is the identity permutation. This completes the proof.
\end{proof}


\subsection{Proof of Theorem \ref{thm:nontrivial-symmetries}(ii)}

For part (ii) of Theorem \ref{thm:nontrivial-symmetries} we seek to
construct, for any given length $n\ge4$, an 
$\lfloor (n-2)/2\rfloor$-parameter family of strings
$\bfa$ of length $n$ that have a nontrivial symmetry in the sense of
Definition \ref{def:nontrivial-symmetries}.  In view of Lemma
\ref{lem:char-number}, this amounts to constructing strings $\bfa$
for which there exists a permutation $\sigma(\bfa)$ with $\sigma(\bfa)\not=\bfa$
and $\sigma(\bfa)\not=\bfar$ that has the same characteristic number as
the string $\bfa$.

The key to our construction lies in a special class of strings defined as follows.

\begin{definition}[Stable strings]
\label{def:stable-string}
Let $\bfa=(a_1,\dots,a_n)$ be a finite string of positive integers and
let 
$C(\bfa)=\left(\begin{smallmatrix} p'&p\\q'&q\end{smallmatrix}\right)$
be the convergent matrix of $\bfa$.
The string $\bfa$ is called \emph{stable} if it satisfies 
\begin{equation}
\label{eq:def-stable}
p=2q',
\end{equation}
In other words, a stable string is a string whose convergent matrix has the
property that its $(1,2)$ entry is exactly twice its $(2,1)$ entry.
\end{definition}


\begin{lemma}
\label{lem:stable-families}
\mbox{}
\begin{itemize}
\item[(i)] The following families of strings are stable:
\begin{align}
\label{eq:length2-stable-strings}
&(t,2t)
\quad (t\in\NN),
\\
\label{eq:length3-stable-strings}
&(t,1,2t+1)
\quad (t\in\NN).
\end{align}
\item[(ii)]
If $\bfa=(a_1,\dots,a_n)$ is a stable string, then any string of the
form
\begin{align}
\notag
&(t,\bfar, 2t)=(t,a_n,\dots,a_1,2t)
\quad (t\in\NN)
\end{align}
is also stable.
\item[(iii)]
For each $n\ge2$ there exists an $\lfloor n/2\rfloor$-parameter
family of stable strings of length $n$.
\end{itemize}
\end{lemma}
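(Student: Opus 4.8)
The plan is to verify the three parts in turn: part (i) by direct computation of convergent matrices from the identity \eqref{eq:key1}, part (ii) by a short computation built on Lemma \ref{lem:convergent-matrices}, and part (iii) by iterating the construction of part (ii) starting from the base cases supplied by part (i). For part (i), one computes
\[
C(t,2t)=\mat{t}\mat{2t}=\begin{pmatrix}1&2t\\ t&2t^2+1\end{pmatrix},
\qquad
C(t,1,2t+1)=\mat{t}\mat{1}\mat{2t+1}=\begin{pmatrix}1&2t+2\\ t+1&2t^2+4t+1\end{pmatrix}.
\]
In the first matrix $p=2t=2q'$ and in the second $p=2t+2=2(t+1)=2q'$, so both families satisfy the stability condition \eqref{eq:def-stable}.

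For part (ii), let $\bfa$ be stable with $C(\bfa)=\left(\begin{smallmatrix}p'&p\\ q'&q\end{smallmatrix}\right)$, so $p=2q'$. By Lemma \ref{lem:convergent-matrices}(iv) we have $C(t,\bfar)=\left(\begin{smallmatrix}p&q\\ p'+tp&q'+tq\end{smallmatrix}\right)$, and appending the digit $2t$ via Lemma \ref{lem:convergent-matrices}(ii) yields
\[
C(t,\bfar,2t)=\begin{pmatrix}q& p+2tq\\ q'+tq& (p'+tp)+2t(q'+tq)\end{pmatrix}.
\]
The $(1,2)$-entry of this matrix is $p+2tq$ and the $(2,1)$-entry is $q'+tq$, so the stability condition for $(t,\bfar,2t)$ reads $p+2tq=2(q'+tq)$, which simplifies to $p=2q'$---exactly the hypothesis that $\bfa$ is stable. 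Hence $(t,\bfar,2t)$ is stable. Note that this operation increases the length by $2$, introduces the single new parameter $t$, and preserves positivity of all digits.

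For part (iii), I would argue by induction on $n$, using the operation of part (ii) to pass from length $n$ to length $n+2$. For even $n\ge2$, start from the $1$-parameter family $(t_1,2t_1)$ of length $2$ in part (i) and apply the construction of part (ii) a total of $(n-2)/2$ times; for odd $n\ge3$, start from the $1$-parameter family $(t_1,1,2t_1+1)$ of length $3$ and apply it $(n-3)/2$ times. Since each step adds $2$ to the length and one fresh parameter, the resulting family of stable strings of length $n$ carries $1+(n-2)/2=n/2$ parameters when $n$ is even and $1+(n-3)/2=(n-1)/2$ parameters when $n$ is odd, in both cases $\lfloor n/2\rfloor$. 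Distinct parameter tuples give distinct strings, since in $(t,\bfar,2t)$ the last digit recovers $t$ while deleting the first and last digits and reversing recovers $\bfa$; so this is a genuine $\lfloor n/2\rfloor$-parameter family.

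The computations involved are all routine matrix multiplications, so the proof itself poses no real obstacle; the substantive content lies in the choice of \emph{stable} as the condition $p=2q'$ and in recognizing the palindrome-type operation $\bfa\mapsto(t,\bfar,2t)$ as the one that preserves it, and both of these are already packaged into the statement of the lemma.
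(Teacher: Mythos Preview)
Your proof is correct and follows essentially the same approach as the paper: direct matrix computation for (i), the same convergent-matrix calculation for (ii) (you factor it through Lemma~\ref{lem:convergent-matrices}(iv) and (ii) rather than multiplying $\mat{t}C(\bfar)\mat{2t}$ directly, but the result and the verification $p+2tq=2(q'+tq)\iff p=2q'$ are identical), and the same inductive build-up for (iii). Your added remark that distinct parameter tuples yield distinct strings is a small bonus not spelled out in the paper.
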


\begin{proof}
(i) Using \eqref{eq:key1}, we calculate 
the convergent matrices associated with the strings
\eqref{eq:length2-stable-strings} and
\eqref{eq:length3-stable-strings}:
\begin{align*}
C(t,2t)&=\mat{t}\mat{2t}=\begin{pmatrix} 1&2t\\ t & 1 + 2t^2\end{pmatrix},
\\
C(t,1,2t+1)&=\mat{t}\mat{1}\mat{2t+1}
=\begin{pmatrix} 1&2+2t\\ 1+t & 1 + 4t+2t^2\end{pmatrix}.
\end{align*}
Both of these matrices satisfy the stability condition  \eqref{eq:def-stable}
for any $t\in\NN$, so the associated families of strings are stable as
claimed. 

\medskip
(ii) 
Assume $\bfa=(a_1,\dots,a_n)$ is a stable string with convergent matrix
$C(\bfa)=\left(\begin{smallmatrix} p'&p\\q'&q\end{smallmatrix}\right)$.
Using the relation (cf. Lemma \ref{lem:convergent-matrices}(i))
$C(\bfar)= \left(\begin{smallmatrix}
p'&q'\\p&q\end{smallmatrix}\right)$,
we obtain, for any $t\in\NN$,
\begin{align*}
C(t,\bfar,2t)&=\mat{t}C(\bfar)\mat{2t}
=\mat{t}\begin{pmatrix} p'&q'\\p&q\end{pmatrix}\mat{2t}
\\
&=\begin{pmatrix}q & p+2tq \\ q'+tq & p'+2tq' + tp +2t^2q\end{pmatrix}
=\begin{pmatrix} r'&r\\s'&s\end{pmatrix},
\end{align*}
say.   Since the string $\bfa$ is stable, we have $p=2q'$. It follows that 
$r=p+2tq=2q'+2tq=2(q'+tq)=2s'$,
so the string $(t,\bfar,2t)$ is stable as well.

\medskip
(iii) This follows from parts (i) and (ii) 
by starting out with the families of strings 
\eqref{eq:length2-stable-strings} and
\eqref{eq:length3-stable-strings} and inductively applying the
procedure described in part (ii) of the lemma. At each step the length of
the string is increased by $2$ and an additional free parameter is
introduced.  Thus the total number of free parameters in the families
of strings generated by this process is $\lfloor n/2\rfloor$, where $n$
is the length of the string. 
\end{proof}

\begin{remark}
\label{rem:explicit-stable-families}
The families of stable strings generated by the iterative procedure of
Lemma \ref{lem:stable-families} can be described explicitly. In the case
$n=2m$ is even, the strings are of the form
\begin{equation}
\notag
(\delta_1 t_m,\delta_2 t_{m-1},\dots,\delta_m t_1,
\delta_{m-1} t_1,\dots,\delta_{1} t_{m-1}, \delta_{0} t_m)
\quad (t_1,\dots,t_m\in\NN),
\end{equation}
where $\delta_i=1$ if $i$ is odd and  $\delta_i=2$ if $i$ is even.
A similar, though slightly more complicated,
explicit description could be given for the case when $n$ is odd.
Table \ref{table:stable-families} shows the families of strings obtained
from the lemma for lengths $n\le 7$.

\begin{table}[H]
\begin{center}
\addtolength{\tabcolsep}{6pt}
\renewcommand{\arraystretch}{1.5}
\begin{tabular}{|c|c|}
\hline
$n$& $\bfa$ 
\\
\hline
$2$ & $(t_1,2t_1)$ 
\\
$3$ & $(t_1,1, 2t_1+1)$
\\
$4$ & 
$(t_2,2t_1, t_1, 2t_2)$  
\\
$5$ &
$(t_2, 2t_1+1,1,t_1,2t_2)$
\\
$6$ &
$(t_3,2t_2,t_1,2t_1,t_2,2t_3)$
\\
$7$ &
$(t_3, 2t_2, t_1, 1, 2t_1+1, t_2, 2t_3)$
\\
\hline
\end{tabular}
\caption{Families of stable strings $\bfa$ of length
$n\in\{2,3,4,5,6,7\}$ constructed by the procedure
of Lemma \ref{lem:stable-families}. Here $t_1,t_2,\dots$ are arbitrary
positive integer parameters.} 
\label{table:stable-families} 
\end{center}
\end{table}
\end{remark}


We next show 
that any stable string of length $n$
yields a string of length $n+2$ that has a nontrivial symmetry.
Given a string $\bfa=(a_1,\dots,a_n)$ of positive integers, let $\bfap$
be the string defined by 
\begin{equation}
\label{eq:def-bfap}
\bfap=(2,1,a_1,\dots,a_{n-1},a_n+1).
\end{equation}
Thus, $\bfap$ is the string obtained from $\bfa$ by prepending the
digits $2$ and $1$ and incrementing the last digit in $\bfa$ by $1$.

\begin{lemma}
\label{lem:stable-to-nontrivial-symmetries}
If $\bfa=(a_1,\dots,a_n)$ is a stable string of positive integers, then
the string $\bfap$ defined by \eqref{eq:def-bfap}
has a nontrivial symmetry $\sigma$ given by 
\begin{equation}
\label{eq:proof1.4-nontrivial-symmetry}
\sigma(\bfap)=(2,a_{n}+1,a_{n-1},\dots,a_1,1).
\end{equation}
\end{lemma}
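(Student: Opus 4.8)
The plan is to verify that the permutation $\sigma$ for which $\sigma(\bfap)=(2,a_n+1,a_{n-1},\dots,a_1,1)$ is a nontrivial symmetry of the string $\bfap=(2,1,a_1,\dots,a_{n-1},a_n+1)$ in the sense of Definition~\ref{def:nontrivial-symmetries}. Such a $\sigma$ exists at all because the tuple on the right has the same multiset of entries as $\bfap$. Since $\bfap$ and $\sigma(\bfap)$ have the same length $n+2$, Lemma~\ref{lem:char-number} tells us that $P_{GK}(\sigma(\bfap))=P_{GK}(\bfap)$ is equivalent to $\chi(\sigma(\bfap))=\chi(\bfap)$, so the whole statement reduces to two things: (a) the inequalities $\sigma(\bfap)\neq\bfap$ and $\sigma(\bfap)\neq\bfapr$, which are visible from the last digits alone, since $\sigma(\bfap)$ ends in $1$ whereas $\bfap$ ends in $a_n+1\ge2$ and $\bfapr$ ends in $2$; and (b) the single identity $\chi(\bfap)=\chi(\sigma(\bfap))$, which is where the stability hypothesis $p=2q'$ will be used.

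To prove (b) I would first record the convergent matrix of the auxiliary string $\bfa'=(a_1,\dots,a_{n-1},a_n+1)$. Its $(n-1)$st convergent agrees with that of $\bfa$, namely $p'/q'$, and its $n$th convergent is $[a_1,\dots,a_n+1]=(p+p')/(q+q')$ — the right-hand endpoint of $I(\bfa)$ computed in Section~\ref{sec:background} — so $C(\bfa')=\left(\begin{smallmatrix}p'&p+p'\\q'&q+q'\end{smallmatrix}\right)$ (equivalently, $C(\bfa')=C(\bfa)+\left(\begin{smallmatrix}0&p'\\0&q'\end{smallmatrix}\right)$, since $\mat{a_n+1}$ and $\mat{a_n}$ differ only in the lower‑right entry). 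Next I would note the structural identities $\bfap=(2,1,\bfa')$ and $\sigma(\bfap)=(2,\overleftarrow{\bfa'},1)$, the latter because $(a_n+1,a_{n-1},\dots,a_1)$ is exactly the reverse of $\bfa'$. Then, using \eqref{eq:key1}, Lemma~\ref{lem:convergent-matrices}(i), and $\mat{2}\mat{1}=\left(\begin{smallmatrix}1&1\\2&3\end{smallmatrix}\right)$, I would compute $C(\bfap)=\mat{2}\mat{1}\,C(\bfa')$ and $C(\sigma(\bfap))=\mat{2}\,C(\overleftarrow{\bfa'})\,\mat{1}$ explicitly and read off from Definition~\ref{def:char-number} (product of the sum of the second column with the sum of the second row) that
\[
\chi(\bfap)=(3p+3p'+4q+4q')(2p+4p'+3q+6q'),\qquad
\chi(\sigma(\bfap))=(3p+4p'+3q+4q')(2p+3p'+4q+6q').
\]

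Finally I would substitute the stability relation $p=2q'$: both products collapse to $(3p'+4q+10q')(4p'+3q+10q')$, with the two factors merely interchanged, so $\chi(\bfap)=\chi(\sigma(\bfap))$ and hence $P_{GK}(\bfap)=P_{GK}(\sigma(\bfap))$ by Lemma~\ref{lem:char-number}. (More conceptually: $\chi(\sigma(\bfap))$ is obtained from $\chi(\bfap)$ by swapping the convergent‑matrix entries $p'$ and $q$, and the relation $p=2q'$, which that swap leaves intact, makes $\chi(\bfap)$ symmetric under it.) I do not expect a genuine obstacle beyond careful bookkeeping in the four $2\times2$ products and keeping track of which row/column sums feed into $\chi$; the one point that is not purely mechanical is that $p=2q'$ is exactly the relation needed to force the two a priori different polynomials in $p,p',q,q'$ to coincide — which is precisely the property the definition of ``stable string'' was designed to capture.
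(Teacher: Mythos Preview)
Your proof is correct and follows essentially the same route as the paper: both reduce the claim via Lemma~\ref{lem:char-number} to the equality $\chi(\bfap)=\chi(\sigma(\bfap))$, compute both characteristic numbers as the same pair of bilinear forms in $p,p',q,q'$ (you via the auxiliary string $\bfa'$ and its reverse, the paper via the identity $\left(\begin{smallmatrix}0&1\\1&a_n+1\end{smallmatrix}\right)=\left(\begin{smallmatrix}0&1\\1&a_n\end{smallmatrix}\right)\left(\begin{smallmatrix}1&1\\0&1\end{smallmatrix}\right)$), and then substitute $p=2q'$ to obtain the common value $(3p'+10q'+4q)(4p'+10q'+3q)$. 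Your nontriviality check by comparing last digits is also equivalent to the paper's.
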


\begin{proof}
Note that $\sigma(\bfap)$ is the permutation that reverses the last
$n+1$ digits of $\bfap$. Since $a_n+1\not=1$, 
this permutation cannot be the identity permutation, and since
$2\not=1$, it cannot be the permutation that reverses the digits of
$\bfap$.  Thus, $\sigma$ is a nontrivial permutation in the sense of
Definition \ref{def:nontrivial-symmetries}, and in view of 
Lemma \ref{lem:char-number}, it therefore remains to show that the strings
$\bfap$ and $\sigma(\bfap)$ have the same characteristic number, i.e., 
that
\begin{equation}
\label{eq:proof1.4-char-number-equality}
\chi(\bfap)=\chi(\sigma(\bfap)).
\end{equation}
Using 
the identity
$\left(\begin{smallmatrix} 0&1\\1&a+1\end{smallmatrix}\right)=
\left(\begin{smallmatrix} 0&1\\1&a\end{smallmatrix}\right)
\left(\begin{smallmatrix} 1&1\\0&1\end{smallmatrix}\right)$
and \eqref{eq:key1} 
we obtain 
\begin{align}
\notag
C(\bfap)&
=\mat{2}\mat{1}\mat{a_1}\dots\mat{a_{n-1}}\mat{a_n+1}
\\
\notag
&=\mat{2}\mat{1}C(\bfa)\begin{pmatrix} 1&1 \\ 0 & 1\end{pmatrix}
=\mat{2}\mat{1}
\begin{pmatrix} p'&p\\q'&q\end{pmatrix}
\begin{pmatrix} 1&1 \\ 0 & 1\end{pmatrix}
\\
\notag
&=\begin{pmatrix} 
p'+q' & p'+q'+p+q\\
2p'+3q' & 2p'+3q'+2p+ 3q
\end{pmatrix} 
= \begin{pmatrix} r'&r\\s'&s\end{pmatrix},
\end{align}
say.
It follows that 
\begin{align}
\label{eq:chibfap-calculation}
\chi(\bfap)&=(r+s)(s'+s)
=(3p'+4q'+3p+4q)(4p'+6q'+2p+3q)
\\
\notag
&=(3p'+10q'+4q)(4p'+10q'+3q),
\end{align}
where the last step follows from the assumption that the string $\bfa$ is
stable and thus satisfies $p=2q'$.

Similarly, noting that $\sigma(\bfap)$ is the string 
obtained by reversing the last $n+1$ digits of $\bfap$ (i.e., all digits 
of $\bfap$ after the leading digit $2$), we obtain 
\begin{align}
\notag
C(\sigma(\bfap))
&=\mat{2}\left[
\mat{1}
\begin{pmatrix} p'&p \\ q' & q\end{pmatrix}
\begin{pmatrix} 1&1 \\ 0 & 1\end{pmatrix}
\right]^T
\\
\notag
&=\mat{2}
\begin{pmatrix} 1&0\\1&1\end{pmatrix}
\begin{pmatrix} p'&q' \\ p & q\end{pmatrix}
\mat{1}
\\
\notag
&=\begin{pmatrix}
q'+q & p'+q'+p+q \\
3q' + 2q & 3p'+3q'+2p+2q
\end{pmatrix}
= \begin{pmatrix} u'&u\\v'&v\end{pmatrix},
\end{align}
say, and hence 
\begin{align}
\notag
\chi(\sigma(\bfap))
&=(u+v)(v'+v)=(4p'+4q'+3p+3q)(3p'+6q'+2p+4q)
\\
\label{eq:chisigmabfap-calculation}
&=(4p'+10q'+3q)(3p'+10q'+4q).
\end{align}
Comparing \eqref{eq:chibfap-calculation} 
with \eqref{eq:chisigmabfap-calculation} yields the desired symmetry
relation \eqref{eq:proof1.4-char-number-equality}.
\end{proof}

Table \ref{table:families-with-nontrivial-symmetries}
illustrates the construction of strings with nontrivial symmetries from
stable strings  described in Lemmas \ref{lem:stable-families} and
\ref{lem:stable-to-nontrivial-symmetries}. 


\begin{table}[H]
\begin{center}
\addtolength{\tabcolsep}{-3pt}
\renewcommand{\arraystretch}{1.5}
\begin{tabular}{|c|c|c|}
\hline
$\bfa$ & $\bfap$ & $\sigma(\bfap)$
\\
\hline
$(t_1,2t_1)$ & $(2,1,t_1,2t_1+1)$ & $(2,2t_1+1,t_1,1)$ 
\\
$(t_1,1, 2t_1+1)$ & $(2,1,t_1,1,2t_1+2)$ & $(2,2t_1+2,1,t_1, 1)$ 
\\
$(t_2,2t_1, t_1, 2t_2)$ & 
$(2,1,t_2,2t_1,t_1,2t_2+1)$ &
$(2,2t_2+1, t_1, 2t_1, t_2, 1)$
\\
$(t_2,2t_1+1,1,t_1,2t_2)$ &
$(2,1,t_2,2t_1+1,1,t_1,2t_2+1)$ &
$(2,2t_2+1,t_1, 1, 2t_1+1, t_2, 1)$ 
\\
\hline
\end{tabular}
\caption{Families of stable strings $\bfa$ of length $n\in\{2,3,4,5\}$ and the
associated families of strings $\bfap$ of length $n+2$ 
with nontrivial symmetries
$\sigma(\bfap)$ given by \eqref{eq:def-bfap}
and \eqref{eq:proof1.4-nontrivial-symmetry}
Here $t_1$ and $t_2$ are
arbitrary positive integer parameters.}
\label{table:families-with-nontrivial-symmetries}. 
\end{center}
\end{table}

\begin{proof}[Proof of Theorem \ref{thm:nontrivial-symmetries}(ii)]
Let $n\ge 4$ be given and set $n'=n-2$ (so that $n'\ge 2$). By Lemma
\ref{lem:stable-families}(iii) there exists an $\lfloor n'/2\rfloor$-parameter
family of stable strings $\bfa$ of length $n'$. By Lemma
\ref{lem:stable-to-nontrivial-symmetries} each such string $\bfa$ gives
rise to a string $\bfap$ of length $n'+2=n$ that has a nontrivial
symmetry, and the mapping $\bfa\to \bfap$ is obviously injective. Thus, the
strings $\bfap$ obtained in this manner form 
an $\lfloor (n-2)/2\rfloor$-parameter family of strings of length $n$ 
with a nontrivial symmetry, as claimed.
\end{proof}

\begin{remark}
\label{rem:generalized-stable-families}
The families of strings $\bfap$ of even lengths
constructed via Lemmas \ref{lem:stable-families} and
\ref{lem:stable-to-nontrivial-symmetries} 
involve each of the digits $2$
and $1$ exactly once, along with digits of the forms $(*)$ $t_i$,
$2t_i$, $2t_i+1$ 
(cf.  Table \ref{table:families-with-nontrivial-symmetries}). 
By requiring the parameters $t_i$
to be pairwise distinct and satisfy $t_i\equiv 1\bmod 4$ and $t_i>2$
one can ensure that the digits of the strings $\bfap$ of even length
obtained from this construction are pairwise distinct.  
For strings $\bfap$ of odd length $5$ or greater, 
this is not the case, as the construction given above necessarily
involves two
occurrences of the digit $1$ (see the cases $n=3$ and $n=5$ in Table
\ref{table:families-with-nontrivial-symmetries}), although by
restricting the parameters $t_i$ as before one can ensure that all
remaining digits of $\bfap$ are  pairwise distinct. 
The duplication 
of the digit $1$ in the case of strings of odd length can be avoided 
by a generalized version of the construction given in Lemmas
\ref{lem:stable-families} and \ref{lem:stable-to-nontrivial-symmetries}
that involves an additional parameter $s$.  We give a brief sketch of the
argument.
Define a string $\bfa$ to be \emph{$s$-stable} if it satisfies
$p=(s^2+s)q'$. The latter condition generalizes the stability condition
\eqref{eq:def-stable}, which corresponds to the case $s=1$. 
Similar to 
Lemmas \ref{lem:stable-families} and \ref{lem:stable-to-nontrivial-symmetries}
one can verify that all strings of the form $(t,(s^2+s)t)$ 
and $(t,s^2+s-1, (s^2+s)t+1)$, where $t\in\NN$, 
are $s$-stable, and then use an inductive process 
to construct, for each $n\ge2$, an infinite $\lfloor
n/2\rfloor$-parameter family of $s$-stable strings, each of which gives
rise to a string $\bfap$  with a nontrivial symmetry.
For odd $n\ge5$ the strings $\bfap$  of length $n$ 
obtained in this manner involve $\lfloor (n-2)/n\rfloor$ additional parameters
$t_i$ and digits of the form $2$, $s$, $2s$, 
$t_i$, $(s^2+s)t_i$, and $(s^2+s)t_i+s$. 
These digits will be pairwise distinct if we  
require the parameter $s$ to be greater than $2$ and the parameters $t_i$ to be
pairwise distinct and congruent to $1$ modulo $2s^2+2s$.
\end{remark}


\section{Numerical Data and Conjectures}
\label{sec:numerical-data}

Recall (cf. Definition \ref{def:nontrivial-symmetries}) that a string
$\bfa=(a_1,\dots,a_n)$ of positive integers is said to have a \emph{nontrivial
symmetry} if there exists a permutation of $\bfa$ other than the identity
and the reverse that preserves the Gauss--Kuzmin measure $P_{GK}(\bfa)$
of the string.  By Theorem \ref{thm:nontrivial-symmetries}, strings of
length $n=3$ have no nontrivial symmetries,  while for each $n\ge 4$
there exists an infinite family of strings of length $n$ that \emph{do}
have a nontrivial symmetry.  Conjecture \ref{conj:nontrivial-symmetries}
states that strings of the latter type are the exception rather than the
rule in the sense that their proportion among all strings of length $n$
of digits in $\{1,\dots,N\}$ tends to $0$ as $N\to\infty$.  In this
section, we provide numerical evidence supporting this conjecture, and
we propose refined versions of this conjecture.

For simplicity, we consider only strings of distinct
digits. This restriction does not affect the assertion of
Conjecture \ref{conj:nontrivial-symmetries} since, for each fixed $n$,
the proportion of strings $\bfa=(a_1,\dots,a_n)\in\{1,\dots,N\}^n$ that
have distinct digits is $N(N-1)\dots(N-n+1)/N^n$, which converges to $1$ 
as $N\to\infty$. 

Given an $n$-tuple $(a_1,\dots,a_n)$ 
of distinct positive integers, let 
\begin{equation}
\label{eq:nubfa-def}
\nu(\bfa)=\#\{P_{GK}(\sigma(\bfa)): \sigma\in S_n \},
\end{equation}
where $S_n$ is the set of all permutations on $\{1,\dots,n\}$.
Thus, $\nu(\bfa)$ is the number of distinct values of the Gauss--Kuzmin 
measure $P_{GK}(\sigma(\bfa))$ as $\sigma(\bfa)$ runs through the $n!$
permutations of $\{a_1,\dots,a_n\}$.   Note that $\nu(\bfa)$
depends only on the digits $a_1,\dots,a_n$, not on the order in which
these digits occur in the string $\bfa$.  We may therefore assume that
$a_1<\dots<a_n$.

Trivially, we have $\nu(\bfa)\le n!$. Moreover, pairing up each
permutation of $\bfa$ with its reverse, we see that the symmetry property
\eqref{eq:symmetry-alt}
implies  $\nu(\bfa)\le n!/2$, 
with equality if and only if none 
of the permutations of $\bfa$ has a nontrivial symmetry.  Thus, a
natural way to quantify the occurrence of strings with nontrivial
symmetries is to compare, for large $N\in\NN$,
the number of tuples $\bfa=(a_1,\dots,a_n)\in\NN^n$ with 
$a_1<\dots<a_n\le N$ that satisfy the condition $\nu(\bfa)<n!/2$
with the total number of such tuples, i.e., with $\binom{N}{n}$. 
Set
\begin{align}
\notag
f(N,n)&=\#\left\{
\bfa=(a_1,\dots,a_n)\in\NN^n: a_1<\dots<a_n\le N, 
\ \nu(\bfa)<\frac{n!}{2} \right\},
\\
\notag
\delta(N,n)&=\frac{f(N,n)}{\binom{N}{n}}.
\end{align}
The quantity $\delta(N,n)$ represents the probability
that a random sample of $n$ distinct digits in $\{1,\dots,N\}$ 
has a permutation with a nontrivial symmetry.

The quantities $\nu(\bfa)$ defined in \eqref{eq:nubfa-def}, and hence
the numbers $f(N,n)$ and $\delta(N,n)$, 
can be computed from the explicit formula
\eqref{eq:gk-formula} for the Gauss--Kuzmin measure $P_{GK}(\bfa)$.
Using the symbolic computation software \emph{Mathematica}, we
carried out these computations for $n\in\{4,5,6\}$ and a range of values
of $N$.  

For $n=4$, we computed the exact values of $f(N,4)$ for all positive
integers $N\le 120$.  Table \ref{table:fN4} and Figure \ref{fig:fN4} 
below show the results of these computations.  The table lists, for
$N=10,20,\dots,120$, the total number, $\binom{N}{4}$, of unordered
$4$-tuples of distinct positive integers at most $N$, along with the number,
$f(N,4)$, and proportion, $\delta(N,4)=f(N,4)/\binom{N}{4}$, of these
tuples that have a permutation with a nontrivial symmetry.  Also shown
is the ratio $f(N,4)/N$, which measures the rate of growth of the
function $f(N,4)$ compared to that of a linear function in $N$.

\begin{table}[H]
\begin{center}
\addtolength{\tabcolsep}{6pt}
\renewcommand{\arraystretch}{1.5}
\begin{tabular}{|c|r|r|r|r|}
\hline
$N$ & $\binom{N}{4}$ & $f(N,4)$ & $f(N,4)/N$ & $\delta(N,4)$ 
\\
\hline
10 & 210 & 10 & 1.0000 & 0.047619 \\
 20 & 4845 & 30 & 1.5000 & 0.006192 \\
 30 & 27405 & 47 & 1.5667 & 0.001715 \\
 40 & 91390 & 66 & 1.6500 & 0.000722 \\
 50 & 230300 & 87 & 1.7400 & 0.000378 \\
 60 & 487635 & 104 & 1.7333 & 0.000213 \\
 70 & 916895 & 121 & 1.7286 & 0.000132 \\
 80 & 1581580 & 142 & 1.7750 & 0.000090 \\
 90 & 2555190 & 159 & 1.7667 & 0.000062 \\
 100 & 3921225 & 178 & 1.7800 & 0.000045 \\
 110 & 5773185 & 199 & 1.8091 & 0.000034 \\
 120 & 8214570 & 216 & 1.8000 & 0.000026 \\
 \hline
\end{tabular}
\caption{Strings of length $4$ with nontrivial symmetries.}
\label{table:fN4}
\end{center}
\end{table}


\begin{figure}[H]
\begin{center}
\includegraphics[width=0.7\textwidth]{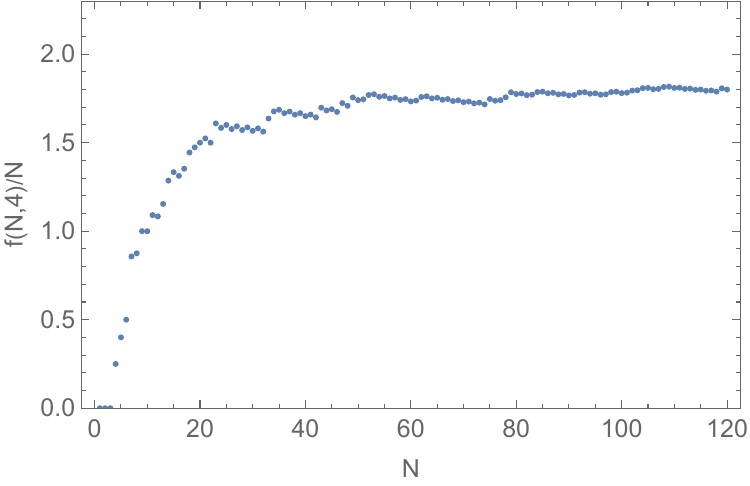}
\caption{The ratios $f(N,4)/N$ for $N\le 120$.}
\label{fig:fN4}
\end{center}
\end{figure}

The data shown in Table \ref{table:fN4}
and Figure \ref{fig:fN4}
provide rather compelling evidence that strings of
length $4$ with nontrivial symmetries are exceedingly rare.  
For example, 
among the $\binom{120}{4}=8,214,570$ unordered $4$-tuples of distinct
integers in $\{1,\dots,120\}$, only $216$ have a permutation with a
nontrivial symmetry. 
The probabilities
$\delta(N,4)$ listed in the last column of the table
seem to converge rapidly to $0$ as $N\to\infty$.
Moreover, the counts $f(N,4)$ of strings with nontrivial symmetries 
appear to grow at a rate that is roughly linear in $N$.
We are therefore led to the following conjecture.

\begin{conjecture}
\label{conj:fN4}
We have:
\begin{itemize}
\item[(i)]
$\displaystyle \lim_{N\to\infty} \delta(N,4)=0$;
\item[(ii)]
$\displaystyle \lim_{N\to\infty}\frac{\log f(N,4)}{\log N}=1$.
\end{itemize}
\end{conjecture}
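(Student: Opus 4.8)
The plan is to turn the conjecture into a lattice-point count on a fixed finite family of algebraic hypersurfaces. For a $4$-set $\bfa$ of distinct positive integers the $24$ orderings fall into $12$ reversal-classes, and by the symmetry property (Proposition \ref{lem:symmetry-property}) each class carries a single value of $P_{GK}$; thus $\nu(\bfa)\le 12$, with $\nu(\bfa)<12$ exactly when two reversal-classes share a value, which by Lemma \ref{lem:char-number} happens iff two reversal-classes share a characteristic number. Expanding Definition \ref{def:char-number} (a short computation gives $\chi(a_1,a_2,a_3,a_4)=K(a_1+1,a_2,a_3,a_4)\cdot K(a_4+1,a_3,a_2,a_1)$, a product of two classical continuants), the coincidence $\chi(\rho(\bfa))=\chi(\pi(\bfa))$ between two prescribed reversal-classes becomes a polynomial identity $Q_{\rho,\pi}(a_1,a_2,a_3,a_4)=0$ with $\deg Q_{\rho,\pi}\le 8$. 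Hence $f(N,4)$ is the number of $(a_1,a_2,a_3,a_4)$ with $1\le a_1<a_2<a_3<a_4\le N$ lying on at least one of the (at most $\binom{12}{2}$) hypersurfaces $Q_{\rho,\pi}=0$.

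\textbf{Part (i).} No $Q_{\rho,\pi}$ arising from a genuinely nontrivial pair of reversal-classes is the zero polynomial: since $f(10,4)=10<\binom{10}{4}$ by Table \ref{table:fN4}, some $4$-set of distinct digits in $\{1,\dots,10\}$ has all $12$ characteristic numbers distinct, and evaluating $Q_{\rho,\pi}$ there certifies $Q_{\rho,\pi}\not\equiv 0$. A routine induction shows that a nonzero polynomial of degree $d$ in $k$ variables vanishes at $O_d(N^{k-1})$ points of $\{1,\dots,N\}^k$; applying this with $k=4$ and summing over the finitely many pairs gives $f(N,4)=O(N^3)$, whence $\delta(N,4)=f(N,4)/\binom{N}{4}=O(1/N)\to 0$.

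\textbf{Part (ii), lower bound.} By Theorem \ref{thm:nontrivial-symmetries}(ii) — concretely the first row of Table \ref{table:families-with-nontrivial-symmetries} — each string $(2,1,t,2t+1)$ has a nontrivial symmetry, so the underlying $4$-set $\{1,2,t,2t+1\}$ is counted in $f(N,4)$. For $t\ge 3$ these sets are pairwise distinct and lie in $\{1,\dots,N\}^4$ as soon as $2t+1\le N$, so $f(N,4)\ge\lfloor (N-1)/2\rfloor -2\gg N$. Therefore $\liminf_{N\to\infty}\log f(N,4)/\log N\ge 1$, which is part (ii) up to the matching upper bound.

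\textbf{Part (ii), upper bound --- the main obstacle.} What remains is $f(N,4)=N^{1+o(1)}$, i.e. $f(N,4)=O_\varepsilon(N^{1+\varepsilon})$ for every $\varepsilon>0$; since $\binom{N}{4}\asymp N^4$ this is far stronger than the crude bound used in (i), and it is where the real difficulty lies. I would analyze each coincidence polynomial $Q_{\rho,\pi}$ on the sorted region $1\le a_1<a_2<a_3<a_4$, grouping the $12$ reversal-classes by which unordered pair of digits occupies the two interior positions. Beyond the symmetric leading term $(a_1a_2a_3a_4)^2$, the next homogeneous part of $\chi$ equals $a_2^2a_3^2\,a_1a_4(a_1+a_4)$, so it depends only on the interior pair and on symmetric functions of the exterior pair; consequently a coincidence between classes with \emph{different} interior pairs forces a constraint of the shape $\tfrac1{x_i}+\tfrac1{x_j}=\tfrac1{x_k}+\tfrac1{x_l}$ on the $4$-set together with lower-order corrections, while the (only) coincidence between the two classes sharing an interior pair is the ``interior swap'' $\chi(a_1,a_2,a_3,a_4)=\chi(a_1,a_3,a_2,a_4)$, whose defining polynomial factors through $a_2-a_3$, the complementary factor expected not to vanish on the sorted region by a continuant size estimate in the spirit of the proof of Lemma \ref{lem:proof1.3-lem1}. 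The crux --- the step I expect to resist an elementary treatment, and the reason the statement is only conjectured --- is to prove that inside the sorted positive orthant every surviving component of every $Q_{\rho,\pi}=0$ is contained in a finite union of algebraic curves, each of the parametric type produced by Lemma \ref{lem:stable-to-nontrivial-symmetries} (one digit a fixed polynomial of another), plus $O(1)$ isolated points; a single such curve contributes $O(N)$ lattice points to the box, so this would yield $f(N,4)=O(N)$ and the conjecture with room to spare. In particular one must rule out any two-dimensional component of a $Q_{\rho,\pi}=0$ meeting the sorted orthant, since that alone would force $f(N,4)\gg N^{2-o(1)}$; establishing this ``dimension drop'' uniformly over all the $Q_{\rho,\pi}$, presumably through resultant and elimination computations combined with monotonicity and positivity properties of continuants, is the heart of the matter.
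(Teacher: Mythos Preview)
The statement you are addressing is a \emph{conjecture} in the paper, not a theorem; the paper offers no proof of either part, only numerical evidence (Tables~\ref{table:fN4}--\ref{table:fN6}) together with the remark that the construction in Theorem~\ref{thm:nontrivial-symmetries}(ii) yields the lower bound $f(N,4)\gg N$ in part~(ii). So there is no paper proof to compare against.

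That said, your argument for part~(i) is more than a proposal: it is a complete and correct proof, and it goes strictly beyond what the paper establishes. The reduction to a finite family of polynomial coincidences $Q_{\rho,\pi}=0$ is sound (since $\chi(\bfa)$ is a polynomial in the digits), the nonvanishing of each $Q_{\rho,\pi}$ for a genuinely distinct pair of reversal-classes is certified by the existence of a single $4$-set with $\nu(\bfa)=12$ (and $f(10,4)<\binom{10}{4}$ guarantees such a set), and the Schwartz--Zippel type bound $\#\{\mathbf{x}\in\{1,\dots,N\}^4:Q(\mathbf{x})=0\}=O_d(N^3)$ for a nonzero polynomial $Q$ of degree $d$ is standard. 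Summing over the $\binom{12}{2}$ pairs gives $f(N,4)=O(N^3)$ and hence $\delta(N,4)=O(1/N)$. The authors apparently did not notice this elementary route to part~(i).

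Your lower bound for part~(ii) reproduces exactly the observation the paper makes immediately after stating the conjecture. Your discussion of the upper bound correctly isolates the genuine obstacle: one must show that each hypersurface $Q_{\rho,\pi}=0$ meets the sorted positive orthant only in a set of dimension at most $1$. You are right that this ``dimension drop'' is the heart of the matter and that nothing in the paper (nor in your sketch) settles it; the continuant-expansion heuristics you outline are plausible but do not constitute a proof. So your proposal resolves part~(i), matches the paper on the easy half of part~(ii), and leaves the hard half of part~(ii) exactly where the paper leaves it --- open.
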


Since $\delta(N,4)=f(N,4)/\binom{N}{4}=O(f(N,4)/N^4)$, 
part (ii) of the conjecture is a stronger version of part (i), implying
that $\delta(N,4)$ converges to $0$ at a rate $O(1/N^c)$ for any
constant $c<3$.
We note that the construction given 
in the proof of Theorem \ref{thm:nontrivial-symmetries}(ii)
implies $f(N,4)\gg N$ and hence yields 
$\liminf_{N\to\infty}\log f(N,4)/\log N=1$, i.e., the lower bound 
in part (ii) of the conjecture.

Tables \ref{table:fN5} and \ref{table:fN6} below show the results
of analogous computations for strings of length $n\in\{5,6\}$.  Because
of running time limitations\footnote{The number of cases to be examined
grows at a rate proportional to $N^n$.},
the range for the variable $N$ had to be
restricted to $N\le60$ for $n=5$ and $N\le 35$ for $n=6$.

\begin{table}[H]
\begin{center}
\addtolength{\tabcolsep}{6pt}
\renewcommand{\arraystretch}{1.5}
\begin{tabular}{|c|r|r|r|}
\hline
$N$ & $\binom{N}{5}$ & $f(N,5)$ & $\delta(N,5)$ 
\\
\hline
 10 & 252 & 8 & 0.031746 \\
 20 & 15504 & 43 & 0.002773 \\
 30 & 142506 & 85 & 0.000596 \\
 40 & 658008 & 137 & 0.000208 \\
 50 & 2118760 & 184 & 0.000087 \\
 60 & 5461512 & 236 & 0.000043 \\
 \hline
\end{tabular}
\caption{Strings of length $5$ with nontrivial symmetries.
}
\label{table:fN5}
\end{center}
\end{table}

\begin{table}[H]
\begin{center}
\addtolength{\tabcolsep}{6pt}
\renewcommand{\arraystretch}{1.5}
\begin{tabular}{|c|r|r|r|}
\hline
$N$ & $\binom{N}{6}$ & $f(N,6)$ & $\delta(N,6)$ 
\\
\hline
 10 & 210 & 23 & 0.109524 \\
 15 & 5005 & 100 & 0.019980 \\
 20 & 38760 & 276 & 0.007121 \\
 25 & 177100 & 496 & 0.002801 \\
 30 & 593775 & 746 & 0.001256 \\
 35 & 1623160 & 1088 & 0.000670 \\
\hline
\end{tabular}
\caption{Strings of length $6$ with nontrivial symmetries.
}
\label{table:fN6}
\end{center}
\end{table}

Again, the tables provide convincing evidence that the probabilities 
$\delta(N,n)$ approach $0$ as $N\to\infty$.
We make the following
conjecture, which generalizes part (i) of the latter conjecture to
strings of arbitrary length $n\ge4$, and implies Conjecture 
\ref{conj:nontrivial-symmetries}.

\begin{conjecture}
\label{conj:deltaNn}
For any integer $n\ge 4$ we have
\begin{equation}
\notag
\lim_{N\to\infty} \delta(N,n)=0.
\end{equation}
\end{conjecture}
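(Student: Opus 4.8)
The plan is to establish the stronger quantitative bound $f(N,n)=O_n(N^{n-1})$, which at once gives $\delta(N,n)=f(N,n)/\binom{N}{n}=O_n(1/N)\to0$.

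First I would turn the counting of strings with nontrivial symmetries into a question about zeros of finitely many polynomials. By Lemma~\ref{lem:char-number} a set $\{a_1<\dots<a_n\}$ of distinct digits is counted by $f(N,n)$ precisely when the $n!$ characteristic numbers $\chi(\sigma(\bfa))$, $\sigma\in S_n$, take fewer than $n!/2$ distinct values; since $\chi$ is unchanged under reversing a string, these values already come in reversal pairs, so $\nu(\bfa)<n!/2$ is equivalent to the existence of permutations $\alpha,\beta$ with $\alpha\ne\beta$ and $\alpha\ne\beta w_0$ (where $w_0$ is the order-reversing permutation $w_0(i)=n+1-i$) such that $\chi(\alpha(\bfa))=\chi(\beta(\bfa))$, evaluated at the sorted tuple $(a_1,\dots,a_n)$. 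For each of the finitely many such ordered pairs $(\alpha,\beta)$ this is the vanishing of the integer polynomial
\[
P_{\alpha,\beta}(x_1,\dots,x_n)=\chi(x_{\alpha(1)},\dots,x_{\alpha(n)})-\chi(x_{\beta(1)},\dots,x_{\beta(n)}),
\]
whose total degree is at most $2n$ (one reads $\deg\chi=2n$ off \eqref{eq:char-number-def}). Hence $f(N,n)\le\sum_{(\alpha,\beta)}\#\{\mathbf x\in\{1,\dots,N\}^n:P_{\alpha,\beta}(\mathbf x)=0\}$, and provided each $P_{\alpha,\beta}$ is not identically zero, the Schwartz--Zippel bound (a nonzero polynomial of degree $d$ has at most $dN^{n-1}$ zeros in $\{1,\dots,N\}^n$) gives $f(N,n)\le (n!)^2\cdot 2n\cdot N^{n-1}$, as desired. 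Thus everything reduces to the purely algebraic claim that the stabilizer of $\chi$ in $S_n$, acting by permuting variables, is exactly $\{e,w_0\}$: indeed $\alpha(\chi)=\beta(\chi)$ iff $\beta^{-1}\alpha$ fixes $\chi$, and since $w_0$ is an involution this is precisely the negation of the pair condition above. I expect this algebraic claim to be the main obstacle.

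To prove it, the idea is to factor $\chi$ into irreducibles. Writing $K(\,\cdot\,)$ for Euler's continuant polynomial and recalling that, with the convention $a_0=0$, the convergents satisfy $q_n=K(a_1,\dots,a_n)$ and $p_n=K(a_2,\dots,a_n)$, the continuant recursion yields $p+q=(a_1+1)K(a_2,\dots,a_n)+K(a_3,\dots,a_n)=K(1,a_1,\dots,a_n)$ and, symmetrically, $q'+q=K(a_1,\dots,a_{n-1},a_n+1)=K(1,a_n,\dots,a_1)$, whence
\[
\chi(\bfa)=K(1,\bfa)\cdot K(1,\bfar).
\]
(This renders the identity $\chi(\bfa)=\chi(\bfar)$ transparent, the two factors simply swapping.) Now $K(1,x_1,\dots,x_n)=K(x_1+1,x_2,\dots,x_n)$ is irreducible in $\QQ[x_1,\dots,x_n]$: continuant polynomials in independent variables are irreducible (induction on $n$, using that the two $x_1$-coefficients $K(x_2,\dots,x_n)$ and $K(x_3,\dots,x_n)$ are coprime, a consequence of a determinant identity of the type \eqref{eq:determinant-formula}), and the shift $x_1\mapsto x_1+1$ is a ring automorphism. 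For $n\ge2$ this factor differs from $K(1,x_n,\dots,x_1)$ (their homogeneous degree-$(n-1)$ parts already differ, as noted below). So $\chi$ is a product of two distinct irreducibles, and by unique factorization a permutation $\rho$ fixes $\chi$ iff $K(1,\rho(\bfa))$ equals $K(1,\bfa)$ or $K(1,\bfar)$.

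Finally, to identify those $\rho$, a short computation with the continuant recursion shows that the homogeneous degree-$(n-1)$ part of $K(1,x_1,\dots,x_n)$ is the single monomial $x_2x_3\cdots x_n=\prod_{i\ne1}x_i$. Comparing degree-$(n-1)$ parts therefore forces $\rho(1)=1$ in the first case and $\rho(1)=n$ in the second; substituting $x_{\rho(1)}\mapsto x_{\rho(1)}-1$ then reduces the equation to one between honest continuant polynomials, and the classical rigidity of continuants — a continuant polynomial in independent variables determines its argument string up to reversal, since the monomials of degree two below the top encode the edges of a path graph — forces $\rho=e$ in the first case and $\rho=w_0$ in the second. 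Hence $\operatorname{Stab}(\chi)=\{e,w_0\}$, completing the argument and giving $\delta(N,n)=O_n(1/N)\to0$ for every $n\ge4$. I note that the rate obtained is far weaker than the growth $f(N,4)\asymp N^{1+o(1)}$ suggested by Conjecture~\ref{conj:fN4}, but is more than enough for Conjecture~\ref{conj:deltaNn}; the one genuinely nontrivial ingredient is the irreducibility and rigidity of continuants, everything else being bookkeeping.
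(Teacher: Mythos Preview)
The paper does not prove this statement: Conjecture~\ref{conj:deltaNn} is left open, supported only by the numerical data in Section~\ref{sec:numerical-data}. Your proposal would therefore \emph{resolve} it, and the argument is correct. The factorization $\chi(\bfa)=K(1,\bfa)\,K(1,\bfar)$ follows directly from the continuant recursions (indeed $p+q=K(1,a_1,\dots,a_n)$ and $q'+q=K(a_1,\dots,a_n,1)=K(1,a_n,\dots,a_1)$), and your sketches of the two supporting facts are sound: irreducibility of $K(y_1,\dots,y_n)$ over $\QQ$ follows from its linearity in $y_1$ together with the coprimality of the two $y_1$-coefficients $K(y_2,\dots,y_n)$ and $K(y_3,\dots,y_n)$ (a consequence of the determinant identity of the type \eqref{eq:determinant-formula} that you cite), while the rigidity statement holds because the degree-$(n-2)$ monomials of $K(y_1,\dots,y_n)$ encode exactly the edge set of the path graph on $\{1,\dots,n\}$, whose only automorphisms are $e$ and $w_0$. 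The degree-$(n-1)$ calculation is also right, since the ordinary continuant has no terms of odd codegree, so the sole degree-$(n-1)$ contribution in $K(x_1+1,x_2,\dots,x_n)$ comes from expanding the leading monomial $(x_1+1)x_2\cdots x_n$. With $\operatorname{Stab}_{S_n}(\chi)=\{e,w_0\}$ established, each $P_{\alpha,\beta}$ with $\alpha\notin\{\beta,\beta w_0\}$ is a nonzero polynomial of degree at most $2n$, and the Schwartz--Zippel bound over $\{1,\dots,N\}^n$ gives $f(N,n)=O_n(N^{n-1})$, hence $\delta(N,n)=O_n(1/N)\to 0$. This settles both Conjecture~\ref{conj:deltaNn} and, via the reduction noted in the paper, Conjecture~\ref{conj:nontrivial-symmetries}, going well beyond what the paper establishes.
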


Regarding the rate of growth of $f(N,n)$ for general $n\ge4$, it seems
plausible that, in analogy to part (ii) of  
Conjecture \ref{conj:fN4}, a relation of the form 
\begin{equation}
\label{eq:fNn-asymptotic}
\lim_{N\to\infty}\frac{\log f(N,n)}{\log N}=\alpha_n,
\end{equation}
or possibly even 
\begin{equation}
\label{eq:fNn-strong-asymptotic}
\lim_{N\to\infty}\frac{f(N,n)}{N^{\alpha_n}}=c_n,
\end{equation}
holds with suitable constants  $\alpha_n$ and $c_n>0$. Unfortunately, we
do not have enough data to support a specific conjecture of this type.
We note that the families of strings $\bfa$
constructed in the proof of Theorem 
\ref{thm:nontrivial-symmetries}(ii) involve 
$\lfloor(n-2)/2\rfloor$ parameters, and since the digits in $\bfa$ depend
linearly on these parameters, $f(N,n)$ must grow at a rate at least 
$N^{\lfloor(n-2)/2\rfloor}$ as $N\to \infty$. Thus, the constant
$\alpha_n$ in \eqref{eq:fNn-asymptotic}
and \eqref{eq:fNn-strong-asymptotic}
must satisfy
\begin{equation}
\label{eq:alphan-lower-bound}
\alpha_n\ge \left\lfloor\frac{n-2}{2}\right\rfloor.
\end{equation}
For $n=4$, the bound \eqref{eq:alphan-lower-bound} becomes $\alpha_4\ge
1$, and the data in Table \ref{table:fN4} suggests that this bound is
sharp, i.e., that $\alpha_4=1$.  Whether this remains true for $n>4$ is
an open question.

\section{Concluding Remarks}
\label{sec:concluding-remarks}

While the digits in decimal (and base $b$) expansions of real numbers
behave essentially like independent identically distributed random
variables, the statistical behavior of digits in continued fraction
expansions of real numbers is more complicated and far less intuitive.
The frequency with which a string $\bfa$ of digits occurs in the
continued fraction expansion of a random real number is given by
the Gauss--Kuzmin measure $P_{GK}(\bfa)$ of this string.  Classically,
this measure arises either as a solution to a functional equation (see,
e.g., \cite{khinchin-book}) or as the invariant measure with
respect to the ergodic transformation corresponding to the continued
fraction algorithm (see, e.g., \cite{dajani-kraaikamp-book}). 
In this paper we provided a new, elementary,
derivation of this measure by showing in Theorem
\ref{thm:gk-characterization} that the Gauss--Kuzmin measure is the
\emph{only}
(continuous) measure under which the reverse of a finite string of
continued fraction digits occurs in the continued fraction expansion of
a random real number with the same frequency as the original string.

Motivated by this result, we investigated more generally the extent to
which the frequency of a string of digits in the continued fraction
expansion depends on the order in which these digits appear in the
string. Specifically, we considered the question of whether the reverse of
a string is the \emph{only} permutation  under which this
frequency is invariant.  In Theorem \ref{thm:nontrivial-symmetries} we 
proved that this indeed holds for \emph{all} strings of length $3$,
while for each length $n\ge 4$ there exists an infinite family of
strings of length $n$ that do have a permutation other
than the reversal under which the frequency remains invariant.  
Supported by experimental data, we conjecture (Conjecture
\ref{conj:nontrivial-symmetries}) that strings of the latter type are 
the exception in the sense that for a ``typical'' string $\bfa$ of
continued fraction digits the reversal of the string is the \emph{only}
permutation that leaves this frequency invariant. 

We conclude this paper by mentioning some open problems suggested by
these results. The main---and arguably most interesting---set of open
problems concern the frequencies of strings with nontrivial symmetries.
Call such a string \emph{exceptional}.
One way to quantify the
occurrence of exceptional strings is via the function $\delta(N,n)$
defined in Section \ref{sec:numerical-data}, which can be 
interpreted as the probability that a randomly chosen string of $n$
distinct digits in $\{1,\dots,N\}$ has a permutation that is
exceptional.\footnote{%
Note that, in contrast to the quantity $\nu(\bfa)$ defined in
\eqref{eq:nubfa-def}, which depends 
only on the \emph{set} (or multi-set) of digits 
in a string $\bfa$, 
whether or not $\bfa$ is exceptional (i.e., has
a nontrivial symmetry) depends on the order of the digits in $\bfa$ and
thus is not invariant with respect to permuting these digits.}
What can one say about the asymptotic behavior of
$\delta(N,n)$ as $N\to\infty$?  

Alternatively, one can consider, for a given string
$\bfa=(a_1,\dots,a_n)$ of $n$ distinct positive integers, the quantity
$\epsilon(\bfa)=1-\nu(\bfa)/(n!/2)$, where $\nu(\bfa)$ is defined in
\eqref{eq:nubfa-def}.  As noted in Section \ref{sec:numerical-data}, we
have $\nu(\bfa)\le n!/2$, and hence $\epsilon(\bfa)\ge 0$, with
equality if and only if none of the permutations of the string $\bfa$ is
exceptional.  Thus, $\epsilon(\bfa)$ can be viewed as a measure for the frequency of 
exceptional strings among all permutations of $\bfa$,  
and it is of interest to investigate the asymptotic behavior of
$\epsilon(\bfa)$  as the length $n$ of the string $\bfa$ tends to infinity.
In particular, if $\bfa^{(n)}=(1,2,\dots,n)$, does
$\epsilon(\bfa^{(n)})$ converge to $0$ as $n\to\infty$?  Is it the case
that $\epsilon(\bfa)\to0$ \emph{uniformly in $\bfa$} as the length $n$
of the string $\bfa$ tends to infinity? 

A related problem is to characterize \emph{all}
exceptional strings.  The families of strings constructed in the proof
of Theorem \ref{thm:nontrivial-symmetries}(ii) are examples of such
strings, but there exist exceptional strings that are not part of these
families (nor the generalized families mentioned in Remark
\ref{rem:generalized-stable-families}).  For example, one can verify
that any string of the form $(t+1,1,t+3,t+2)$, where $t\in\NN$, has a
nontrivial symmetry given by $(t+2,1,t+1,t+3)$.

Another circle of questions concerns the set of frequencies of 
the strings $\sigma(\bfa)$ as $\sigma$ runs through all
permutations of a given string $\bfa$.  What can one say about the
maximal and minimal frequencies in this set,
and the permutations under which these extremal frequencies are attained?

\vskip20pt\noindent {\bf Acknowledgements.} 
This work originated with an undergraduate
research project carried out at the \emph{Illinois Geometry Lab}
(IGL) at the University of Illinois; we thank the IGL for providing this
opportunity.  We also thank the referee for a thorough reading of this
paper and helpful comments and suggestions.



\end{document}